 \numberwithin{equation}{section}
\newcommand*{\avint}{\mathop{\ooalign{$\int$\cr$-$}}}
\newcommand{\bdz}{B_{\delta_0}(0)}
\newcommand{\ibd}{\int_{B_{\delta_0}(0)}}
\newcommand{\io}{\int_{\Omega}}
\newcommand{\iqh}{\int_{Q_{\frac{r}{2}}(z)}}
\newcommand{\ibz}{\int_{B_1(0)}}
\newcommand{\ibh}{\int_{B_{\frac{r}{2}}(y)}}
\newcommand{\ibk}{\int_{B_{r_k}(y_k)}}
\newcommand{\iqk}{\int_{Q_{r_k}(z_k)}}
\newcommand{\mzk}{m_{z_k,r_k}}
\newcommand{\qzr}{Q_r(z) }
\newcommand{\qzrk}{Q_{r_k}(z_k) }
\newcommand{\ezr}{E_r(z) }
\newcommand{\ezk}{E_{r_k}(z_k) }
\newcommand{\byr}{B_r(y) }
\newcommand{\air}{\avint_{B_\rho(y)}  }
\newcommand{\aizr}{\avint_{Q_\rho(z)}  }
\newcommand{\aiR}{\avint_{B_R(y)}  }
\newcommand{\ykr}{y_k+r_ky}
\newcommand{\tkr}{\tau_k+r_k^2\tau}
\newcommand{\mat}{\max_{t\in[\tau-\frac{1}{2}r^2, \tau+\frac{1}{2}r^2]}}
\newcommand{\maxd}{\max_{\tau\in[-\frac{1}{2}\delta^2, \frac{1}{2}\delta^2]}}
\newcommand{\maxn}{\max_{\tau\in[-\frac{1}{2}, \frac{1}{2}]}}
\newcommand{\maxth}{\max_{t\in[\tau-\frac{1}{8}r^2, \tau+\frac{1}{8}r^2]}}
\newcommand{\mnp}{(m\cdot\nabla p) }
\newcommand{\nsk}{(n_k\cdot\nabla \sk) }
\newcommand{\ot}{\Omega_T }
\newcommand{\mrt}{m_{y,r}(t) }
\newcommand{\mz}{m_{z,r} }
\newcommand{\prt}{p_{y,r}(t) }
\newcommand{\pkt}{p_{y_k,r_k}(\tkr) }
\newcommand{\wks}{|w_{k}|^2 }
\newcommand{\sk}{\psi_{k} }
\newtheorem{thm}{Theorem}[section]
\newtheorem{cor}{Corollary}[section]
\newtheorem{prop}{Proposition}[section]
\newtheorem{lem}{Lemma}[section]
\newtheorem{clm}{Claim}[section]
\begin{document}
	\title[Partial regularity for weak solutions to a PDE system]{Partial regularity of weak solutions to a PDE system with cubic nonlinearity}
	\author{Jian-Guo Liu and Xiangsheng Xu}\thanks
	{Liu's address: Department of Physics and Department of Mathematics, Duke University, Durham, NC 27708. {\it Email:} jliu@phy.duke.edu.\\
		\hspace{.2in}
		Xu's address:	Department of Mathematics and Statistics, Mississippi State
		University, Mississippi State, MS 39762.
		{\it Email}: xxu@math.msstate.edu. J. Differential Equations, to appear.}
	\keywords{ Cubic nonlinearity, existence, partial regularity, biological transport networks } \subjclass{ 35D30, 35Q99, 35A01}
	\begin{abstract} In this paper we investigate regularity properties of weak solutions to a PDE system that arises in the study of biological transport networks. The system consists of a possibly singular elliptic equation for the scalar pressure of the underlying biological network coupled to a diffusion equation for the conductance vector of the network. There are several different types of nonlinearities in the system. Of particular mathematical interest is a term that is a polynomial function of solutions and their partial derivatives and this polynomial function has degree three.  That is, the system contains a cubic nonlinearity. Only weak solutions to the system have been shown to exist. The regularity theory for the system remains fundamentally incomplete. In particular, it is not known whether or not weak solutions develop singularities.  In this paper we obtain a partial regularity theorem, which
	gives an estimate for the parabolic Hausdorff dimension of the set of possible singular points. 
	\end{abstract}
	\maketitle
		\section{Introduction}
Let $\Omega$ be a bounded domain in $\mathbb{R}^N$ and $T$ a positive number. Set $\ot=\Omega\times(0,T)$. We study the behavior of weak solutions of the system
	\begin{eqnarray}
	-\mbox{{div}}\left[(I+m\otimes m)\nabla p\right]&=& S(x)\ \ \ \mbox{in $\ot$,}\label{e1}\\
	\partial_tm-D^2\Delta m-E^2\mnp\nabla p+|m|^{2(\gamma-1)}m&=& 0\ \ \ \mbox{in $\ot$}\label{e2}
		\end{eqnarray}
		for given function $S(x)$ and physical parameters $D, E, \gamma$ with properties:
		\begin{enumerate}
			\item[(H1)] $S(x)\in L^q(\Omega), \ q>\frac{N}{2}$; and
			\item[(H2)] $D, E\in (0, \infty), \gamma\in (\frac{1}{2}, \infty)$.
		\end{enumerate}
		This system has been proposed by Hu and Cai (\cite{H}, \cite{HC}) to describe 
		natural network formulation. Then the scalar  pressure function $p=p(x,t)$  follows  Darcy's law, while the vector-valued function $m=m(x,t)$ is the conductance vector. The function $S(x)$ is the time-independent source term. Values of the parameters $D, E$, and $\gamma$ are
		determined by the particular physical applications one has in mind. For example, $\gamma =1$ corresponds to leaf venation \cite{H}. 
		Of particular physical interest is the initial boundary value problem: in addition to (\ref{e1}) and (\ref{e2}) one requires
		\begin{eqnarray}
		m(x,0)&=&m_0(x),\ \ \ \ x\in\Omega,\label{e3}\\
	p(x,t)=0, && m(x,t)=0, \ \ \ (x,t)\in \Sigma_T\equiv\partial\Omega\times(0,T),\label{e4}
		\end{eqnarray}
		at least in a suitably weak sense; here the initial data should satisfy
		$$m_0(x)=0 \ \ \ \mbox{on $\partial\Omega$.}$$
		The existence of weak solutions of this initial boundary value problem was proved by Haskovec, Markowich, and Perthame \cite{HMP}. However, the regularity theory remains fundamentally incomplete. In particular, it is not known whether or not weak solutions develop singularities.
		
		Let us call a point $(x,t)\in \ot$ singular if $m$ is not H\"{o}lder continuous in any neighborhood of $(x,t)$; the remaining points will be called regular points. By a partial regularity theorem, we mean an estimate for the dimension of the set $S$ of singular points. It is well-known that weak solutions to even
		uniformly elliptic systems of partial differential equations are not regular everywhere. We refer the reader to \cite{G} for counter examples. Thus it is only natural to seek partial regularity theorems for these weak solutions. The system under our consideration exhibits a rather peculiar nonlinear structure. The first equation in the system degenerates in the $t$-variable and the elliptic coefficients there are singular in the sense that
		they are not uniformly bounded above a priori, while the second equation contains the term $\mnp\nabla p$, which is a cubic nonlinearity.
		Thus
		the classical partial regularity argument developed in (\cite{G}, \cite{CKN}) does not seem to be applicable here. Our system does resemble the so-call thermistor problem considered in (\cite{X1}-\cite{X3}). The key difference is that the elliptic coefficients in the preceding papers and also in \cite{G} are assumed to be bounded and continuous functions of solutions. As a result, the modulus of continuity can be
		taken to be a bounded, continuous, and concave function. This fact is essential to the arguments in both \cite{X1} and \cite{G}. Our elliptic coefficients here are quadratic in $m$, and thus a new proof must be developed.

					\noindent {\bf Definition.} A pair $(m, p)$ is said to be a weak solution if:
					\begin{enumerate}
						\item[(D1)] $m\in L^\infty(0,T; \left(W^{1,2}_0(\Omega)\cap L^{2\gamma}(\Omega)\right)^N), \partial_tm\in L^2(0,T; \left(L^2(\Omega)\right)^N), p\in L^\infty(0,T; W^{1,2}_0(\Omega)), \\  m\cdot\nabla p \in L^\infty(0,T;  L^{2}(\Omega))$;
						\item[(D2)] $m(x,0)=m_0$ in $C([0,T]; \left(L^2(\Omega)\right)^N)$;
						\item[(D3)] (\ref{e1}) and (\ref{e2}) are satisfied in the sense of distributions.
					\end{enumerate}
					A result in \cite{HMP} asserts that (\ref{e1}) 
					-(\ref{e4}) has a weak solution provided that, in addition to assuming $S(x)\in L^2(\Omega)$ and (H2), we also have
					\begin{enumerate}
						\item[(H3)] $m_0\in\left( W^{1,2}_0(\Omega)\cap L^{2\gamma}(\Omega)\right)^N$.
					\end{enumerate} 
					Note that the question of existence in the case where $\gamma=\frac{1}{2}$ is addressed in \cite{HMPS}. In this case the term $|m|^{2(\gamma-1)}m$ is not continuous at $m=0$. 
					It must be replaced by the following function 
					$$g(x,t)=\left\{\begin{array}{ll}
					|m|^{2(\gamma-1)}m & \mbox{if $m\ne 0$,}\\
					\in [-1,1]^N &\mbox{if $m\ne 0$.}
					\end{array}\right.$$
					
					Partial regularity relies on local estimates \cite{G}. One peculiar feature about our problem (\ref{e1})-(\ref{e4}) is that certain important global estimates have no local versions. This is another source of difficulty for our mathematical analysis. 
					We are ready to state our main result:
					\begin{thm}
					Let (H1)-(H3) be satisfied. Assume that $N\leq 3$. Then the initial boundary value problem (\ref{e1})-(\ref{e4}) has a weak solution on $\ot$ whose singular set $S$
					satisfies
					\begin{equation}
					\mathcal{P}^{N+\varepsilon}(S)=0
					\end{equation}
					for each $\varepsilon>0$.
					\end{thm}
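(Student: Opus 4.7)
The plan is to follow the general framework of partial regularity theory in the style of Giaquinta and Caffarelli--Kohn--Nirenberg, adapted to the peculiar structure of (\ref{e1})--(\ref{e2}). Existence of a weak solution is obtained by the construction of \cite{HMP}, so the real work consists of three pieces: the choice of a scaled ``excess'' functional, an $\varepsilon$-regularity theorem proved by blow-up and contradiction, and a final covering argument that converts $\varepsilon$-regularity into a bound on the parabolic Hausdorff dimension of $S$.

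On a parabolic cylinder $\qzr=\byr\times(\tau-\tfrac12 r^2,\tau+\tfrac12 r^2)$ I would work with an excess of the form
\[
\ezr=\frac{1}{r^{N}}\iq\left(|\nabla m|^2+|m\cdot\nabla p|^2\right)dxdt + (\text{mean-oscillation corrections in }m,p).
\]
The $r^{N}$ normalization is dictated by the global $L^2$-bounds on $\nabla m$ and $m\cdot\nabla p$ and is tailored so that the final $\mathcal{P}^{N+\varepsilon}$ estimate drops out of the covering. The core claim is then: there exist $\varepsilon_0>0$, $\theta\in(0,1)$, $\alpha\in(0,1)$ such that whenever $\ezr\le\varepsilon_0$ with $r$ sufficiently small, one has $E_{\theta r}(z)\le\theta^{2\alpha}\ezr$. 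A standard Campanato iteration then gives $E_\rho(z)\le C\rho^{2\alpha}$ for all small $\rho$, hence H\"older continuity of $m$ at $z$.

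To prove the decay I would blow up: suppose it fails, yielding sequences $z_k=(y_k,\tau_k)$ and $r_k\to 0$ with $\lambda_k^2:=\erk\le\varepsilon_0$ but $E_{\theta r_k}(z_k)>\theta^{2\alpha}\lambda_k^2$, and set
\[
w_k(x,t)=\frac{m(\ykr,\tkr)-\mk}{\lambda_k},\qquad \sk(x,t)=\frac{r_k\bigl(p(\ykr,\tkr)-\pk\bigr)}{\lambda_k}.
\]
The principal obstacle, as stressed in the introduction, is that the coefficient matrix $I+m\otimes m$ is \emph{not} uniformly bounded in $k$, because $\mk$ is not controlled by the smallness of $\erk$. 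I would therefore split into two cases. If $|\mk|$ stays bounded along a subsequence, $(w_k,\sk)$ converges, by Sobolev compactness valid for $N\le 3$, to a limit $(w,\psi)$ solving a constant-coefficient linear parabolic/elliptic system for which the desired interior decay is classical, contradicting the failure of the decay inequality. If $|\mk|\to\infty$, one further renormalizes by $|\mk|$ and exploits the fact that the cubic term couples $m$ and $\nabla p$ only through the scalar $m\cdot\nabla p$, whose $L^2$-norm on $\qzrk$ is controlled by $\lambda_k^2 r_k^N$; writing $n_k=\mk/|\mk|\to n$, the limit coefficient matrix is $n\otimes n$ and the limit system, though degenerate, still enjoys H\"older decay for the relevant directional quantities. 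Passage to the limit in the cubic nonlinearity uses the compact embedding $W^{1,2}\hookrightarrow L^4$, and this is precisely where the hypothesis $N\le 3$ enters.

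Once $\varepsilon$-regularity is established, every $z\in\ot$ with $\liminf_{r\to 0}\ezr<\varepsilon_0$ is regular, so
\[
S\subset \Bigl\{z\in\ot:\liminf_{r\to 0}\ezr\ge\varepsilon_0\Bigr\}.
\]
A Vitali covering of this set by disjoint parabolic cylinders $\qzrk$ of radii $r_k<\delta$ with $\erk\ge\varepsilon_0$ yields
\[
\sum_k r_k^{N}\le \frac{1}{\varepsilon_0}\sum_k \iqk\left(|\nabla m|^2+|m\cdot\nabla p|^2\right)dxdt\le \frac{C}{\varepsilon_0},
\]
and hence $\sum_k r_k^{N+\varepsilon}\le C\delta^\varepsilon\to 0$ as $\delta\to 0$, giving $\mathcal{P}^{N+\varepsilon}(S)=0$ for every $\varepsilon>0$. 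The anticipated main difficulty, as noted above, is closing the blow-up in the unbounded-coefficient case; everything else is standard machinery once that technical point is settled.
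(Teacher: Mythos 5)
Your overall architecture (a scaled excess, an $\varepsilon$-regularity statement proved by blow-up, then a Vitali covering) matches the paper's, but the two steps that carry the actual content of the proof are missing. First, the compactness you invoke in the blow-up does not exist for the pressure: equation (\ref{e1}) contains no $\partial_t p$, so there is no estimate on the time derivative of the rescaled pressures $\psi_k$, and ``Sobolev compactness valid for $N\le 3$'' gives strong convergence of $w_k$ (via the parabolic equation for $m$) but not of $\psi_k$ -- in particular not in the sup-in-time topology that any excess controlling the spatial oscillation of $p$ on each time slice must use. This is precisely the obstruction the paper identifies; it is overcome not by compactness but by decomposing the rescaled pressure as $\eta_k+\phi_k$, where $\eta_k$ solves slice by slice the frozen-coefficient equation $-\mathrm{div}[(I+m_{z_k,r_k}\otimes m_{z_k,r_k})\nabla\eta_k]=0$ (so it has Campanato decay uniformly in $k$, no limit being taken), while $\phi_k\to 0$ in $L^\infty(L^2)$ via $W^{1,s}$ estimates with $s>1$; making that work requires the higher integrability bound $\max_\tau\int |w_k|^{2(1+\beta)}dy\le c$, and it is there -- through the local analogue of estimate (C2) of Proposition 2.1 -- that the restriction $N\le 3$ genuinely enters, not through a compact embedding $W^{1,2}\hookrightarrow L^4$ for the cubic term. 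Without some substitute for this decomposition, your passage to the limit and the claimed excess decay do not go through.

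Second, your treatment of the unbounded-mean case is not a proof. If $|m_{z_k,r_k}|\to\infty$, renormalizing by $|m_{z_k,r_k}|$ produces the rank-one limit coefficient $n\otimes n$, and ``H\"older decay for the relevant directional quantities'' for such a degenerate operator is neither standard nor obviously enough to contradict the failure of decay of an excess measuring the full oscillation; the coupling term $E^2\lambda_k(n_k\cdot\nabla\psi_k)\nabla\psi_k$ also does not obviously disappear under this renormalization. The paper avoids the case altogether: the decay proposition (Proposition 1.2) takes $|m_{z,r}|\le M$ as a hypothesis, the iteration preserves it by the telescoping estimate in Corollary 1.1, and on the good set the bound $\sup_{r>0}|m_{z,r}|<\infty$ is obtained separately (Claim 3.1) from the $L^2$ bounds on $\partial_t m$ and $\nabla m$. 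This is also why the quantity used in the final covering must include $|\partial_t m|^2$ (available globally only through (\ref{f5})) and the pressure terms, not merely $|\nabla m|^2+(m\cdot\nabla p)^2$ as in your covering: with your excess alone you cannot rule out singular points at which the means $m_{z,r}$ blow up, nor control the quantity $A_r(z)$ that links the two equations.
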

					Here $\mathcal{P}^{s}, s\geq 0,$ denotes the $s$-dimensional parabolic Hausdorff measure. Recall
					that the s-dimensional parabolic Hausdorff measure of a set $E\subset\mathbb{R}^N\times\mathbb{R}$ is defined as follows:
						$$\mathcal{P}^s(E)=\sup_{\varepsilon>0}\inf\{\sum_{j=0}^{\infty}r_j^s: \cup_{j=0}^\infty Q_{r_j}(z_j)\supset E, r_j<\varepsilon\},$$
						where $Q_{r_j}(z_j)$ are parabolic cylinders with geometric centers  at $z_j=(y_j, \tau_j)$, i.e., one has
							$$Q_{r_j}(z_j)	=B_{r_j}(y_j)\times(\tau_j-\frac{1}{2}r_j^2,\tau_j+\frac{1}{2}r_j^2 )$$
						with	 $$B_{r_j}(y_j)=\{x\in \mathbb{R}^N:|x-y_j|<r_j\}.$$ 
						It is not difficult to see that $\mathcal{P}^s$ is an outer measure, for which all Borel sets are measurable; on its $\sigma$-algebra of measurable sets, $\mathcal{P}^k$ is a Borel regular measure (cf. \cite{F}, Chap.2.10).
						If $\mathcal{P}^s(E)<\infty$, then $\mathcal{P}^{s+\varepsilon}(E)=0$ for each $\varepsilon>0$.
						We define the parabolic Hausdorff dimension $\mbox{dim}_\mathcal{P}E$ of a set $E$ to be
						$$\mbox{dim}_\mathcal{P}E=\inf\{s\in\mathbb{R}^+:\mathcal{P}^s(E)=0\}.$$
						Then Theorem 1.1 says that 
						\begin{equation}
						\mbox{dim}_\mathcal{P}S\leq N.
						\end{equation}

					Hausdorff measure $\mathcal{H}^s$ is defined in an entirely similar manner, but with $Q_{r_j}(z_j)$ replaced by an arbitrary closed subset of $\mathbb{R}^N\times\mathbb{R}$ of diameter at most $r_j$. (One usually normalizes $\mathcal{H}^s$ for integer $s$ so that it agrees with surface area on smooth $s$-dimensional surfaces.)
						 Clearly,
						\begin{equation}
						\mathcal{H}^{k}(X)\leq c(k)\mathcal{P}^k(X)\ \ \ \mbox{for each $X\subset\mathbb{R}^N\times\mathbb{R}$. }
						\end{equation}	
						To characterize the singular set $S$, we will need to invoke the following known result.
							\begin{lem}
								Let $f\in L^1_{\mbox{loc}}(\Omega_T)$ and for $0\leq s<N+2$ set
								$$E_s=\{z\in\Omega_T:\limsup_{\rho\rightarrow 0^+}\rho^{-s}\int_{Q_\rho(z)}|f|dxdt>0\}.$$
								Then $\mathcal{P}^s(E_s)=0$.
							\end{lem}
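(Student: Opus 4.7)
The statement is a standard density-type result for parabolic Hausdorff measure, and my plan is to reduce it to a Vitali covering argument applied to the level sets of the upper density. First I would decompose
$$E_s=\bigcup_{k=1}^\infty E_s^k,\qquad E_s^k=\Bigl\{z\in\Omega_T:\limsup_{\rho\to 0^+}\rho^{-s}\int_{Q_\rho(z)}|f|\,dxdt>\tfrac{1}{k}\Bigr\},$$
and prove $\mathcal{P}^s(E_s^k)=0$ for every $k$, since countable subadditivity then yields $\mathcal{P}^s(E_s)=0$.

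The first observation I would record is that $E_s$ has Lebesgue measure zero in $\Omega_T$. This follows from the parabolic Lebesgue differentiation theorem: for almost every $z\in\Omega_T$, $\rho^{-(N+2)}\int_{Q_\rho(z)}|f|\,dxdt\to |f(z)|<\infty$, and since $s<N+2$ we then have $\rho^{-s}\int_{Q_\rho(z)}|f|\,dxdt=\rho^{N+2-s}\cdot\rho^{-(N+2)}\int_{Q_\rho(z)}|f|\,dxdt\to 0$. Hence for any $\eta>0$, the absolute continuity of the integral of $|f|$ provides an open set $U\supset E_s^k$ with $\int_U |f|\,dxdt<\eta$.

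Now fix $\varepsilon>0$. For each $z\in E_s^k$, the definition of $E_s^k$ furnishes arbitrarily small radii $\rho<\varepsilon$ with $Q_\rho(z)\subset U$ and
$$\int_{Q_\rho(z)}|f|\,dxdt>\frac{\rho^s}{k}.$$
The family of all such cylinders forms a Vitali cover of $E_s^k$. I would then invoke the Vitali-type covering lemma for parabolic cylinders (these enjoy the required doubling property, so the standard $5r$-covering argument applies) to extract a countable disjoint subfamily $\{Q_{\rho_j}(z_j)\}$ with $E_s^k\subset\bigcup_j Q_{5\rho_j}(z_j)$. Disjointness and the containment in $U$ yield
$$\sum_j (5\rho_j)^s\le 5^s k\sum_j\int_{Q_{\rho_j}(z_j)}|f|\,dxdt\le 5^s k\int_U|f|\,dxdt<5^s k\eta.$$
Since $5\rho_j<5\varepsilon$, this shows that the $\mathcal{P}^s_{5\varepsilon}$-premeasure of $E_s^k$ is at most $5^s k\eta$; letting $\eta\downarrow 0$ and then $\varepsilon\downarrow 0$ gives $\mathcal{P}^s(E_s^k)=0$.

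The main (small) obstacle is ensuring a Vitali-type covering theorem on the parabolic cylinders $Q_\rho(z)$, which are not geodesic balls in any metric one might naively pick. This is handled by working with the parabolic quasi-distance $d_{\mathcal{P}}(z,z')=\max(|y-y'|,|t-t'|^{1/2})$, under which $Q_\rho(z)$ is essentially a $d_{\mathcal{P}}$-ball of radius $\rho$, so the usual $5r$-covering lemma goes through without change. Everything else is a routine application of outer regularity and the absolute continuity of the Lebesgue integral.
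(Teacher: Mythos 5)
Your argument is correct and is precisely the standard Vitali-covering/density argument from Caffarelli--Kohn--Nirenberg that the paper itself invokes without proof (``essentially contained in \cite{CKN}''): decompose into the level sets $E_s^k$, note they are Lebesgue-null by parabolic differentiation since $s<N+2$, put them in an open set of small $|f|$-integral, and run the $5r$-covering lemma for the parabolic quasi-metric to bound the $\mathcal{P}^s$-premeasure by $5^s k\int_U|f|\,dxdt$. The only detail worth adding is that, since $f$ is merely $L^1_{\mathrm{loc}}(\Omega_T)$, one should first intersect $E_s^k$ with an exhaustion of $\Omega_T$ by compact subsets (so that absolute continuity of the integral is available and the small cylinders stay in $\Omega_T$) and then conclude by countable subadditivity of $\mathcal{P}^s$.
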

							The proof of this lemma is essentially contained in \cite{CKN}.
							
							A key observation about our weak solutions in the study of partial regularity is the following proposition.
							\begin{prop}\label{prop1.1}
								Let \textup{(H1)-(H3)} be satisfied and $(p, m)$ be a weak solution to (\ref{e1})-(\ref{e4}). Then we have
								\begin{equation}
								p\in C([0,T]; L^2(\Omega)).
								\end{equation}
							\end{prop}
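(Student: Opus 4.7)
My plan is to establish weak continuity of $t\mapsto p(t)$ into $W^{1,2}_0(\Omega)$ and then upgrade it to strong continuity into $L^2(\Omega)$ via the Rellich--Kondrachov compact embedding. Condition (D1) combined with $\partial_t m\in L^2(0,T;L^2)$ gives, by a standard embedding, $m\in C([0,T];L^2(\Omega))$; Sobolev's embedding in $N\le 3$ yields $m\in L^\infty(0,T;L^6(\Omega))$. Interpolating these produces strong convergence $m(t_n)\to m(t)$ in $L^q(\Omega)$ for every $q\in[2,6)$ whenever $t_n\to t$. Testing (\ref{e1}) against $p(t)$ gives the uniform-in-$t$ bound $\|\nabla p(t)\|_{L^2}+\|m(t)\cdot\nabla p(t)\|_{L^2}\le C$ (using $S\in L^q$, $q>N/2$, and Sobolev).

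Now fix $t$ and let $t_n\to t$. Along a subsequence, $p(t_n)\rightharpoonup p^*$ in $W^{1,2}_0(\Omega)$ and $m(t_n)\cdot\nabla p(t_n)\rightharpoonup\chi$ in $L^2(\Omega)$. To identify $\chi$, pair the weak $L^2$-limit of $\nabla p(t_n)$ with the strong $L^q$-limit of $m(t_n)$ (any $q<6$): the product converges in the sense of distributions to $m(t)\cdot\nabla p^*$, so $\chi=m(t)\cdot\nabla p^*\in L^2(\Omega)$. Pass to the limit in the weak formulation
\[
\int_\Omega\nabla p(t_n)\cdot\nabla\phi\,dx+\int_\Omega\bigl(m(t_n)\cdot\nabla p(t_n)\bigr)\bigl(m(t_n)\cdot\nabla\phi\bigr)\,dx=\int_\Omega S\phi\,dx,\qquad\phi\in C_c^\infty(\Omega).
\]
The linear term converges by weak convergence; the quadratic term converges because $m(t_n)\cdot\nabla\phi\to m(t)\cdot\nabla\phi$ strongly in $L^2$ (since $\nabla\phi\in L^\infty$) while $m(t_n)\cdot\nabla p(t_n)\rightharpoonup m(t)\cdot\nabla p^*$ weakly in $L^2$. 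Therefore $p^*$ solves the elliptic equation (\ref{e1}) at time $t$ in the class $\{v\in W^{1,2}_0:m(t)\cdot\nabla v\in L^2\}$.

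A direct uniqueness argument (apply the equation for the difference $p_1-p_2$ of two solutions to a mollified test function and pass to the limit, exploiting that $m\cdot\nabla(p_1-p_2)\in L^2$) shows that the solution of this elliptic problem is unique, so $p^*=p(t)$. Since every weakly convergent subsequence has the same limit, the full sequence satisfies $p(t_n)\rightharpoonup p(t)$ in $W^{1,2}_0$, and the compact embedding $W^{1,2}_0\hookrightarrow L^2$ then gives $p(t_n)\to p(t)$ in $L^2(\Omega)$. This proves $p\in C([0,T];L^2(\Omega))$.

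The principal obstacle is passing to the limit in the cubic term $(m(t_n)\cdot\nabla p(t_n))(m(t_n)\cdot\nabla\phi)$. This is handled by a compactness--duality argument combining (i) strong $L^q$-convergence of $m(t_n)$ for every $q<6$, which is what $N\le 3$ buys us via $W^{1,2}\hookrightarrow L^6$ together with the $L^2$-continuity of $m$, and (ii) the built-in uniform bound $m\cdot\nabla p\in L^\infty(0,T;L^2)$ from (D1), which is precisely what identifies the weak $L^2$-limit of the cubic factor as $m(t)\cdot\nabla p^*$. The uniqueness step also deserves attention: because $m(t)$ is only in $L^6$, not $L^\infty$, the admissibility of $p_1-p_2$ as a test function requires a careful approximation rather than direct substitution.
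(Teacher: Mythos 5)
Your proof is correct and follows essentially the paper's own argument: extract a weakly $W^{1,2}_0$-convergent subsequence of $p(t_n)$ using the uniform bound from testing the elliptic equation against $p$, identify the limit by passing to the limit in the weak formulation (strong $L^2$ convergence of $m(t_n)$ against weak $L^2$ convergence of $\nabla p(t_n)$ gives weak $L^1$ convergence of $m(t_n)\cdot\nabla p(t_n)$, which the uniform $L^2$ bound promotes to weak $L^2$ convergence), invoke uniqueness of the elliptic problem to get the whole-sequence limit, and conclude by Rellich. One small remark: the interpolation to $L^q$, $q<6$, and the appeal to $N\le 3$ are a detour you do not need --- the argument above is dimension-free, and indeed the paper points out explicitly (start of Section 4) that Proposition \ref{prop1.1} does \emph{not} require $N\le 3$, so it is better not to smuggle that hypothesis in.
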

							
							The proof of this proposition will be given at the end of Section 2.

					Let $(m, p)$ be a weak solution. In view of (\cite{E},\cite{X1}), to establish Theorem 1.1, we will need to define a suitable scaled energy $E_r(z)$ for our system. For this purpose,
					let  $z=(y,\tau)\in \Omega_T, r>0$ with $\qzr\subset\ot$ and pick
					\begin{equation}
						0<\beta<\min\{2-\frac{N}{q}, 1\},\label{cb}
					\end{equation} where $q$ is given as in (H1). We consider the following quantities:
					\begin{eqnarray}
					\prt&=&\avint_{B_r(y)} p(x,t)dx=\frac{1}{|\byr|}\int_{B_r(y)} p(x,t)dx,\\
					\mz &=&\avint_{Q_\rho(z)}  m(x,t)dxdt,\\
				A_r(z)&=&\frac{1}{r^{N}}\mat\int_{B_r(y)}(p(x,t)-\prt)^2dx.
						\end{eqnarray}
					The right choice for $E_r(z)$ seems to be 
					\begin{equation}
					\ezr=\frac{1}{r^{N+2}}\int_{Q_r(z)}|m-\mz|^2dxdt+A_r(z)+r^{2\beta}.
					\end{equation}
					The last term in $E_r(z)$ accounts for the non-homogeneous term $S(x) $ in (\ref{e1}). 
					 Due to the fact that the first equation (\ref{e1}) does not have
					 the $\partial_t p$ term, we are forced to use the term $A_r(z)$ instead of $\frac{1}{r^{N+2}}\int_{Q_r(z)}|p-p_{z,r}|^2dxdt$ in $E_r(z)$. This will cause two problems: one is that in our application of the classical blow-up argument (\cite{E},\cite{G},\cite{X1}), the resulting blow-up sequence is not compact in the desired function space; the other is the characterization of the singular set $S$. That is, it is not immediately clear how one can describe the set
					 \begin{equation}
					 \ot\setminus\{z\in\ot:\lim_{r\rightarrow 0}A_r(z)=0\}\label{rm11}
					 \end{equation} 
					 in terms of the parabolic Hausdorff measure. (Note that this issue is rather simple in the context of \cite{X1}.) To overcome  these two problems, we find a suitable decomposition of $p$. This enables us to show that the lack of compactness in the blow-up sequence does not really matter. To be more specific, we obtain that the blow-up sequence can be decomposed into the sum of two other sequences, one of which converges strongly while the terms of the other are
					 very smooth in the space variables, and this is good enough for our purpose. This idea was first employed in \cite{X1}. However, as we mentioned earlier, the nature of our mathematical difficulty here is totally different.  A similar decomposition technique can also be used to derive the parabolic Hausdorff dimension of the set in (\ref{rm11}). 
					 
				The key to our development is this assertion about energy:
				\begin{prop}\label{pro12}Let the assumptions of Theorem 1.1 hold.
					For each $M>0$ there exist constants $0<\varepsilon,\delta<1$ such that\begin{equation}
					|\mz|	\leq M\ \ \mbox{and}\ \ \ezr\leq\varepsilon\label{con1}
					\end{equation}
					imply
					\begin{equation}
					E_{\delta r}(z)\leq \frac{1}{2}\ezr\label{con2}
					\end{equation}
					for all $z\in\ot$ and $r>0$ with $\qzr\subset\ot$.
				\end{prop}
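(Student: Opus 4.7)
The plan is to argue by contradiction via the classical blow-up technique in the spirit of Giaquinta--Giusti and Caffarelli--Kohn--Nirenberg, with the critical modification (forced on us by the observation surrounding \eqref{rm11}) of decomposing the rescaled pressure into a smooth part and a strongly convergent remainder. Fix $M>0$ and suppose the proposition fails; then one can extract sequences of points $z_k=(y_k,\tau_k)$, radii $r_k$ with $Q_{r_k}(z_k)\subset\Omega_T$, and weak solutions $(m,p)$ of \eqref{e1}--\eqref{e4} such that $|m_{z_k,r_k}|\le M$, $\varepsilon_k:=E_{r_k}(z_k)\to 0$, yet $E_{\delta r_k}(z_k)>\tfrac12\varepsilon_k$, where $\delta\in(0,1)$ is to be chosen later. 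Introduce the blow-up sequence on $Q_1(0)$ by
\[
m_k(x,t):=\tfrac{1}{\sqrt{\varepsilon_k}}\bigl(m(y_k+r_kx,\tau_k+r_k^2 t)-m_{z_k,r_k}\bigr),
\]
\[
p_k(x,t):=\tfrac{1}{\sqrt{\varepsilon_k}}\bigl(p(y_k+r_kx,\tau_k+r_k^2 t)-p_{y_k,r_k}(\tau_k+r_k^2 t)\bigr),
\]
so that $\int_{Q_1(0)}|m_k|^2\,dxdt\le 1$, $\max_{t\in[-1/2,1/2]}\int_{B_1(0)}|p_k|^2\,dx\le 1$, both $m_k$ and $p_k(\cdot,t)$ have vanishing mean over their respective domains, and $r_k^{2\beta}\le\varepsilon_k$.

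Writing $m_k^\sharp:=\sqrt{\varepsilon_k}\,m_k+m_{z_k,r_k}$, the rescaled equations read
\[
-\mbox{div}\bigl[(I+m_k^\sharp\otimes m_k^\sharp)\nabla p_k\bigr]=\tfrac{r_k^2}{\sqrt{\varepsilon_k}}S(y_k+r_kx),
\]
\[
\partial_t m_k-D^2\Delta m_k-E^2\sqrt{\varepsilon_k}\,(m_k^\sharp\cdot\nabla p_k)\nabla p_k+\tfrac{r_k^2}{\sqrt{\varepsilon_k}}|m_k^\sharp|^{2(\gamma-1)}m_k^\sharp=0.
\]
Since $\beta<2-N/q$ by \eqref{cb} and $r_k^{2\beta}\le\varepsilon_k$, the forcing on the right of the elliptic equation has $L^q(Q_1(0))$-norm of order $r_k^{2-N/q-\beta}\to 0$, while the nonlinear terms in the $m$-equation carry prefactors $\sqrt{\varepsilon_k}$ and $r_k^{2-\beta}$ that vanish. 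A Caccioppoli-type argument, applied with care since the coefficients $I+m_k^\sharp\otimes m_k^\sharp$ are not uniformly bounded, yields uniform bounds on $\nabla m_k$ and $\partial_t m_k$ in $L^2(Q_{1/2}(0))$, and, locally, on $\nabla p_k$ in $L^2(Q_{1/2}(0))$. After passing to a subsequence, $m_{z_k,r_k}\to m^*$ with $|m^*|\le M$, and by the Aubin--Lions lemma $m_k\to m_\infty$ strongly in $L^2(Q_{1/2}(0))$, where $m_\infty$ satisfies the heat equation $\partial_t m_\infty-D^2\Delta m_\infty=0$.

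The crux is the passage to the limit in $p_k$: no time regularity of $p_k$ is available, so Aubin--Lions is unusable, and the $\max_t$ in the definition of $A_r$ makes matters worse. Following the decomposition strategy outlined around \eqref{rm11}, for each $t$ I split $p_k(\cdot,t)=q_k(\cdot,t)+h_k(\cdot,t)$, where $h_k(\cdot,t)$ solves the frozen-coefficient Dirichlet problem
\[
-\mbox{div}\bigl[(I+m^*\otimes m^*)\nabla h_k\bigr]=0\ \text{ in }B_1(0),\qquad h_k(\cdot,t)=p_k(\cdot,t)\ \text{ on }\partial B_1(0),
\]
and $q_k(\cdot,t)$ absorbs the coefficient error $(m_k^\sharp\otimes m_k^\sharp)-(m^*\otimes m^*)$ together with the vanishing source. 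An energy estimate for $q_k$, using the strong convergence $m_k\to m_\infty$ and the $L^q$-vanishing of the forcing, yields $\max_t\|q_k(\cdot,t)\|_{L^2(B_1(0))}\to 0$, while the constant-coefficient theory gives the standard Campanato decay for $h_k$, uniformly in $t$ and $k$,
\[
\int_{B_\rho(0)}\bigl(h_k-(h_k)_{0,\rho}(t)\bigr)^2 dx\le C\rho^{N+2}\int_{B_1(0)}|\nabla h_k|^2 dx,\qquad \rho\in(0,\tfrac12].
\]

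Combining the parabolic decay for $m_\infty$ (coming from interior $C^\infty$-smoothness of solutions to the heat equation) with the elliptic Campanato decay for $h_k$, plus the smallness of $q_k$ in the norm controlling $A_\rho$, I would obtain
\[
\frac{1}{(\delta r_k)^{N+2}}\int_{Q_{\delta r_k}(z_k)}|m-m_{z_k,\delta r_k}|^2\,dxdt+A_{\delta r_k}(z_k)\le \bigl(C\delta^{2\alpha}+o(1)\bigr)\varepsilon_k
\]
for some fixed $\alpha>0$. Choosing $\delta$ so small that $C\delta^{2\alpha}+\delta^{2\beta}\le\tfrac14$, and noting $(\delta r_k)^{2\beta}\le\delta^{2\beta}\varepsilon_k$, one deduces $E_{\delta r_k}(z_k)\le\tfrac12\varepsilon_k$ for all large $k$, contradicting the assumption. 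The hardest obstacle is establishing the uniform-in-$t$ pressure decomposition above: identifying the limit coefficient $I+m^*\otimes m^*$ in a way compatible with the $\max_t$ quantity $A_\rho$ while the genuine pressure $p_k$ itself lacks any strong compactness is precisely the difficulty highlighted in the introduction, and the one this argument is structured to overcome.
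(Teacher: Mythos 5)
Your overall strategy coincides with the paper's: blow up by $\lambda_k=\sqrt{E_{r_k}(z_k)}$, obtain strong $L^2$ compactness of the rescaled conductance via Aubin--Lions, and circumvent the lack of time compactness of the rescaled pressure by splitting it into a harmonic-type piece (good Campanato decay, uniform in $t$) and a piece driven by the perturbation of the coefficients. This is essentially the paper's decomposition $\psi_k=\eta_k+\phi_k$ from \eqref{eta1}--\eqref{p2}, with the minor cosmetic difference that you freeze the coefficient at the subsequential limit $m^*$ rather than at $m_{z_k,r_k}$ (the paper's choice avoids depending on the subsequence when fixing $\delta$, but either works).

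The gap is in the sentence ``An energy estimate for $q_k$ \dots yields $\max_t\|q_k(\cdot,t)\|_{L^2}\to 0$.'' This is not a routine energy estimate, and it is precisely where the real work of the proof lives. Writing out your remainder, the right-hand side of the $q_k$-equation contains, after using $(a\otimes a)\nabla p=(a\cdot\nabla p)a$, terms of the form $\lambda_k\,\mathrm{div}\bigl[(n_k\cdot\nabla\psi_k)w_k\bigr]$ and $\lambda_k\,\mathrm{div}\bigl[(w_k\cdot\nabla\psi_k)m_{z_k,r_k}\bigr]$. Both factors in these products are controlled only in $L^2(B_1)$ uniformly in $\tau$ (from \eqref{psu} and \eqref{bl3}), so the products are a priori merely $L^1$ in space, uniformly in time. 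Calder\'on--Zygmund $W^{1,s}$ theory, which you need in order to convert this into $L^2$-decay of $q_k$, fails at $s=1$. Thus the small prefactor $\lambda_k$ does not help: you need the source bounded in $L^s$ for some $s>1$ \emph{uniformly in $\tau$} before $\lambda_k\to 0$ can give you smallness. The paper solves this by proving, uniformly in $\tau$, that $w_k$ is bounded in $L^{2(1+\beta)}(B_{\delta_0}(0))$ (inequality \eqref{pb}), which is the content of Claims 4.1 and 4.2 and relies on a local analogue of the global estimate (C2) in Proposition~\ref{l1}; this is exactly where the hypothesis $N\le 3$ enters. Your proposal never uses $N\le 3$, which is a sign that a crucial step is missing: without the higher integrability of $w_k$, the passage from the decomposition to the contradiction in \eqref{con} does not close, so as written the argument has a genuine hole.
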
	
				The proof of this proposition is given in Section 4. It relies on the decomposition of
			the function $p$ we mentioned earlier.
				An immediate consequence of this proposition is:
				\begin{cor}Let the assumptions of Theorem 1.1 hold.
					To each $M>0$ there corresponds a pair of numbers $\delta_1,\varepsilon_1$ in $(0,1)$
					such that
				  whenever
					\begin{equation}
					|\mz|<\frac{M}{2}\ \mbox{and} \ E_r(z)<\varepsilon_1\label{js2}
					\end{equation}
					we have
					\begin{equation}
					E_{\delta_1^kr}(z)\leq \left(\frac{1}{2}\right)^k\varepsilon_1\ \ \ \mbox{for each positive integer $k$}.\label{js1}
					\end{equation}
				\end{cor}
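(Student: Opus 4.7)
The plan is a standard iteration of Proposition \ref{pro12}, with the only subtlety being the need to control the drift of the mean $m_{z,r}$ as $r$ decreases so that the hypothesis $|m_{z,r}|\le M$ continues to hold at each scale. First, given $M>0$, I apply Proposition \ref{pro12} to produce numbers $\delta, \varepsilon \in (0,1)$ associated with $M$, and I set $\delta_1=\delta$. The value of $\varepsilon_1$ will be chosen below, with $\varepsilon_1\le\varepsilon$ as a first constraint.

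The main auxiliary estimate I need is a bound on how fast the mean $m_{z,r}$ can move as $r$ shrinks. For any $r'\le r$,
\begin{equation*}
|m_{z,r'}-m_{z,r}|=\left|\avint_{Q_{r'}(z)}\!\!(m-m_{z,r})\,dxdt\right|\le \left(\avint_{Q_{r'}(z)}\!\!|m-m_{z,r}|^2\,dxdt\right)^{1/2}\le C\left(\tfrac{r}{r'}\right)^{(N+2)/2}\!\sqrt{E_r(z)},
\end{equation*}
where $C=C(N)$ comes from $|Q_r(z)|=c_N r^{N+2}$ and from the inclusion $Q_{r'}(z)\subset Q_r(z)$. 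In particular, $|m_{z,\delta_1 r}-m_{z,r}|\le C\delta_1^{-(N+2)/2}\sqrt{E_r(z)}$.

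Now I iterate by induction on $k$. Assume that for $j=0,1,\dots,k-1$ we have already verified $|m_{z,\delta_1^j r}|\le M$ and $E_{\delta_1^j r}(z)\le (1/2)^j\varepsilon_1$. Since the base case $j=0$ is exactly the hypothesis (\ref{js2}) (with the bound $M/2<M$), I may apply Proposition \ref{pro12} at scale $\delta_1^{k-1}r$ to deduce $E_{\delta_1^k r}(z)\le \tfrac12 E_{\delta_1^{k-1}r}(z)\le(1/2)^k\varepsilon_1$. Using the drift estimate telescopically,
\begin{equation*}
|m_{z,\delta_1^k r}|\le |m_{z,r}|+\sum_{j=0}^{k-1}|m_{z,\delta_1^{j+1}r}-m_{z,\delta_1^j r}|\le \frac{M}{2}+C\delta_1^{-(N+2)/2}\sqrt{\varepsilon_1}\sum_{j=0}^\infty 2^{-j/2}.
\end{equation*}
The geometric series converges, so I now fix $\varepsilon_1\in(0,\varepsilon]$ small enough that $C\delta_1^{-(N+2)/2}\sqrt{\varepsilon_1}/(1-2^{-1/2})\le M/2$. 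This gives $|m_{z,\delta_1^k r}|\le M$, closing the induction and yielding (\ref{js1}).

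The only step that required any thought is the drift control, and this is entirely routine; the genuine work has been done in Proposition \ref{pro12}, so no real obstacle arises in the corollary itself.
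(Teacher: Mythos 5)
Your proposal is correct and follows essentially the same route as the paper: iterate Proposition \ref{pro12}, control the drift of the means $m_{z,\delta_1^k r}$ by a telescoping sum of square roots of the scaled energies, sum the resulting geometric series, and shrink $\varepsilon_1\le\varepsilon$ so the total drift stays below $M/2$. The only cosmetic differences are that you bound $|m_{z,\delta_1 r}-m_{z,r}|$ in one step against the larger cylinder's mean (factor $\delta_1^{-(N+2)/2}$) where the paper uses a two-term triangle inequality (factor $1+\delta^{-(N+2)}$), and that the paper records an explicit formula for $\varepsilon_1$ while you fix it implicitly; neither affects the argument.
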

				\begin{proof}
					We essentially follow the proof of Corollary 3.8 in \cite{X1} (also see \cite{G}). Let $M>0$ be given. By Proposition 1.2, there exist $0<\varepsilon,\delta<1$ such that (\ref{con1})	and (\ref{con2}) hold. We claim that we
					can take
					\begin{eqnarray}
					\delta_1&=&\delta,\\
					\varepsilon_1&=&\min\left\{\varepsilon,\left(\frac{M\delta^{N+2}(\sqrt{2}-1)\sqrt{\omega_N}}{2\delta^{N+2}+2\sqrt{2}}\right)^2\right\},
					\end{eqnarray}
					where $\omega_N$ is the volume of the unit ball in $\mathbb{R}^N$.
					To see this,  
					let (\ref{js2}) hold. Obviously, (\ref{js1}) is satisfied for $k=1$. Now for each positive integer $j$ suppose (\ref{js1}) is true for all $k\leq j$. We will show that it is also true for $k=j+1$. To this end, we integrate the inequality 
					$$|m_{z, \delta^ir}-m_{z, \delta^{i-1}r}|\leq |m_{z, \delta^ir}-m(x,t)|+|m(x,t)-m_{z, \delta^{i-1}r}|$$
					over $Q_{\delta^ir}(z)$ to derive
					\begin{eqnarray}
					|m_{z, \delta^ir}-m_{z, \delta^{i-1}r}|&\leq&\avint_{Q_{\delta^ir}(z)} |m_{z, \delta^ir}-m(x,t)|dxdt\nonumber\\
					&&+\frac{1}{\delta^{N+2}}\avint_{Q_{\delta^{i-1}r}(z)}|m(x,t)-m_{z, \delta^{i-1}r}|dxdt\nonumber\\
					&\leq&\left(\avint_{Q_{\delta^ir}(z)} |m_{z, \delta^ir}-m(x,t)|^2dxdt\right)^\frac{1}{2}\nonumber\\
					&&+\frac{1}{\delta^{N+2}}\left(\avint_{Q_{\delta^{i-1}r}(z)}|m(x,t)-m_{z, \delta^{i-1}r}|^2dxdt\right)^\frac{1}{2}\nonumber\\
					&\leq&\left(\frac{1}{\omega_N}E_{\delta^ir}(z)\right)^\frac{1}{2}+\frac{1}{\delta^{N+2}}
					\left(\frac{1}{\omega_N}E_{\delta^{i-1}r}(z)\right)^\frac{1}{2}\nonumber\\
					&\leq&\left(\frac{1}{\omega_N}\left(\frac{1}{2}\right)^i\varepsilon_1\right)^\frac{1}{2}+\frac{1}{\delta^{N+2}}
					\left(\frac{1}{\omega_N}\left(\frac{1}{2}\right)^{i-1}\varepsilon_1\right)^\frac{1}{2},\nonumber\\
					&&i=1,\cdots, j.
					\end{eqnarray}
					Subsequently, we have
					\begin{eqnarray}
					|m_{z, \delta^jr}|&\leq&\mz+\sum_{i=1}^{j}|m_{z, \delta^ir}-m_{z, \delta^{i-1}r}|\nonumber\\
					&\leq&\frac{M}{2}+\sum_{i=1}^{j}\left(\frac{1}{\omega_N}\left(\frac{1}{2}\right)^i\varepsilon_1\right)^\frac{1}{2}+\sum_{i=1}^{j}\frac{1}{\delta^{N+2}}
					\left(\frac{1}{\omega_N}\left(\frac{1}{2}\right)^{i-1}\varepsilon_1\right)^\frac{1}{2}\nonumber\\
					&\leq& \frac{M}{2}+\frac{\delta^{N+2}+\sqrt{2}}{\delta^{N+2}(\sqrt{2}-1)\sqrt{\omega_N}}\sqrt{\varepsilon_1}\leq M.
					\end{eqnarray}
					By Proposition 1.2, (\ref{js1})	 holds for $k=j+1$. This completes the proof.
					\end{proof}
				This corollary combined with the argument in (\cite{G}, p.86) asserts that there exist $c=c(\delta_1, \varepsilon_1, r)	\in (0,1), \gamma=\gamma(\delta_1)>0$ such that
				\begin{equation}\label{js3}
				E_\rho(z)\leq c\rho^\gamma\ \ \ \mbox{for all $0<\rho\leq r$.}
				\end{equation}
				Obviously, $\mz, \avint_{Q_r(z)}|m-m_{z,r}|^2dxdt$ are both continuous functions of $z$. By Proposition 1.1, $\ezr$ is also a continuous function of $z$. Thus whenever (\ref{js2}) holds for some $z=z_0$ there is an open neighborhood $O$ of $z_0$ over which (\ref{js2}) remains true. As a result, (\ref{js3})
				is satisfied on $ O$. This puts us in a position to apply a result in \cite{S1}. To state the result, we 
			define, for $\mu\in (0,1)$,
				$$[m]_{\mu, O}=\sup\left\{\frac{|m(x,t)-m(y,\tau)|}{\left(|x-y|+|t-\tau|^{\frac{1}{2}}\right)^\mu}: (x,t), (y,\tau)\in O\right\}.$$
				Parabolic H\"{o}lder spaces can be characterized by the following version of Campanato's
				theorem (\cite{S1}, Theorem 1).
				\begin{lem} Let $u\in L^2(\Omega_T)$.
					If there exist $\alpha\in(0,1)$ and $R_0>0$ such that
					$$\avint_{Q_\rho(z)}|u-u_{z,\rho}|^2dxdt\leq A^2\rho^{2\alpha}$$
					for all $z$ in an open subset $O$ of $\Omega_T$ and all
					$\rho\leq R_0$ with $Q_\rho(z)\subset\ot$, then we have
					$$[m]_{\alpha, O}\leq c(N)A.$$
					That is,  $u$ is H\"{o}lder continuous in $O$.
				\end{lem}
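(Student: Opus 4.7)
The plan is to establish Lemma 1.3 by a Campanato-type argument adapted to parabolic cylinders: use the integral decay hypothesis to show the averages $u_{z,\rho}$ form a Cauchy family as $\rho\to 0^+$, identify the limit with a pointwise H\"{o}lder-continuous representative of $u$, and then control its oscillation between two points using a common enclosing cylinder.

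First, I would fix $z\in O$ and a dyadic sequence $\rho_k = 2^{-k}R_0$ with $Q_{\rho_k}(z)\subset\Omega_T$. Since $u_{z,\rho_{k+1}}$ is itself an average on $Q_{\rho_{k+1}}(z)\subset Q_{\rho_k}(z)$, the triangle inequality, Cauchy--Schwarz, and the inclusion of cylinders (with volume ratio $2^{N+2}$) give
\begin{equation*}
|u_{z,\rho_{k+1}} - u_{z,\rho_k}| \leq \avint_{Q_{\rho_{k+1}}(z)}|u-u_{z,\rho_k}|\,dxdt \leq c\left(\avint_{Q_{\rho_k}(z)}|u-u_{z,\rho_k}|^2\,dxdt\right)^{1/2} \leq c\,A\,\rho_k^\alpha .
\end{equation*}
Summing the resulting geometric tail shows $\{u_{z,\rho_k}\}$ is Cauchy, producing a limit $u^*(z)$. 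Running the same estimate for arbitrary radii $\rho<r\leq R_0$ yields $|u_{z,\rho}-u^*(z)|\leq c\,A\,\rho^\alpha$. Lebesgue differentiation (parabolic cylinders form a Vitali family) identifies $u^*=u$ almost everywhere.

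Next, for $z_1=(x_1,t_1)$ and $z_2=(x_2,t_2)$ in $O$ with parabolic distance $d=|x_1-x_2|+|t_1-t_2|^{1/2}$ small, I would set $r=2d$ and pick a base point $z_3$ (for instance, the parabolic midpoint) so that both $Q_r(z_1)$ and $Q_r(z_2)$ are contained in a dilated cylinder $Q_{Cr}(z_3)\subset\Omega_T$ for a dimensional constant $C$. Comparing the two small averages to the common larger one exactly as in the first step,
\begin{equation*}
|u_{z_i,r} - u_{z_3,Cr}| \leq c\left(\avint_{Q_{Cr}(z_3)}|u-u_{z_3,Cr}|^2\,dxdt\right)^{1/2} \leq c\,A\,r^\alpha,\qquad i=1,2,
\end{equation*}
so the triangle inequality combined with the Cauchy bound $|u_{z_i,r}-u^*(z_i)|\leq c\,A\,r^\alpha$ yields $|u^*(z_1)-u^*(z_2)|\leq c\,A\,d^\alpha$, which is precisely the stated H\"{o}lder estimate.

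The only genuinely delicate ingredient is the geometric step: one must verify that the enlarged cylinder $Q_{Cr}(z_3)$ still sits inside $\Omega_T$, which is why the argument is first carried out on compactly contained subsets of $O$. Since H\"{o}lder continuity is a local property, it suffices to run the argument with $d$ much smaller than $R_0$ and than the parabolic distance to $\partial\Omega_T$; the constants depend only on $N$, giving the stated bound $[u]_{\alpha,O}\leq c(N)A$. All remaining manipulations reduce to Jensen, Cauchy--Schwarz, and summing a geometric series.
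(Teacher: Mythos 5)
Your argument is correct in substance, but note that the paper does not prove this lemma at all: it is quoted as a known result, namely Theorem 1 of Schlag's paper \cite{S1} on Campanato-type characterizations of parabolic H\"older spaces. So you have supplied the standard self-contained Campanato iteration (dyadic averages are Cauchy with rate $\rho^\alpha$, Lebesgue differentiation along parabolic cylinders identifies the limit with $u$ a.e., and a two-point comparison through a common enclosing cylinder gives the oscillation bound), whereas the paper simply outsources the proof; your route is the classical one and is exactly what such a citation hides. Two small points deserve tightening. First, the hypothesis is only assumed for cylinders \emph{centered at points of $O$}, so your ``parabolic midpoint'' $z_3$ may fail to lie in $O$ when $O$ is not convex; it is cleaner to compare both averages to $u_{z_1,Cr}$, using $Q_r(z_2)\subset Q_{Cr}(z_1)$ for a dimensional $C$. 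Second, the geometric series gives a factor $(1-2^{-\alpha})^{-1}$, so the constant genuinely depends on $\alpha$ as well as $N$, and the enclosing cylinder must satisfy $Cr\le R_0$ and $Q_{Cr}(z_1)\subset\Omega_T$; hence what you actually prove is the local H\"older estimate (pairs of points whose parabolic distance is small compared with $R_0$ and with their distance to the parabolic boundary), not literally a seminorm bound over all of $O$ with constant $c(N)$. That is a looseness already present in the paper's statement of the lemma, and the local conclusion is all that is used there, so your proof is adequate for the application.
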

					
					To describe the singular set $S$, we set
					\begin{equation}
					R=\{z=(y,\tau)\in \ot:\sup_{r>0}|\mz|<\infty, \ \lim_{r\rightarrow 0} E_r(z)=0\}.\label{rt66}
					\end{equation}
					Here and in what follows $\lim_{r\rightarrow 0}$ means $\lim_{r\rightarrow 0^+}$ because we always have $r>0$.
					If $z\in R$, we take $M>2\sup_{r>0}|\mz|$. By Corollary 1.1, there exist $\delta_1,\varepsilon_1\in (0,1)$ such that (\ref{js2})	and (\ref{js1}) hold. We can find a $r$ such that
				$$\ezr<\varepsilon_1.$$ For the same $r$ we obviously have 
				$$|\mz|<\frac{M}{2}.$$ Consequently, $m$ is H\"{o}lder continuous in a neighborhood of $z$. That is, $R$ is a set of regular points. Obviously, $R$ is an open set. 
				
				Note that since we have the term $A_r(z)$ instead of $\frac{1}{r^{N+2}}\int_{Q_r(z)}|p-p_{z,r}|^2dxdt$ in $E_r(z)$ Proposition 1.2 does not imply that $p$ is locally H\"{o}lder continuous in the space-time domain $R$. The difference between the two quantities can be seen from the following calculation:
				\begin{eqnarray}
				\avint_{Q_\rho(z)} |p-p_{z,r}|^2 dxdt&=&\frac{1}{r^2}\int_{\tau-\frac{1}{2}r^2}^{\tau+\frac{1}{2}r^2}\avint_{B_r(y)}|p-p_{z,r}|^2 dxdt\nonumber\\
				&\leq&\frac{2}{r^2}\int_{\tau-\frac{1}{2}r^2}^{\tau+\frac{1}{2}r^2}\avint_{B_r(y)}|p-p_{y,r}(t)|^2 dxdt\nonumber\\
				&&+\frac{2}{r^2}\int_{\tau-\frac{1}{2}r^2}^{\tau+\frac{1}{2}r^2}|p_{y,r}(t)-p_{z,r}|^2 dt\nonumber\\
				&\leq&\frac{2}{\omega_N}A_r(z)+\frac{2}{r^2}\int_{\tau-\frac{1}{2}r^2}^{\tau+\frac{1}{2}r^2}|p_{y,r}(t)-p_{z,r}|^2 dt.
				\end{eqnarray} Obviously, the last term above causes the problem. Of course, for each $t=t_0$, $p(x,t_0)$ is locally H\"{o}lder continuous in $x$ in $R\cap\{t=t_0\}$.
				
				To estimate the parabolic Hausdorff dimension of the singular set $S\subseteq \ot\setminus R$, we have the following proposition.
				\begin{prop}\label{prop1.3}
					Let \textup{(H1)-(H3)} hold and $(p, m)$ be a weak solution. Then we have
					\begin{equation}
					\mbox{dim}_\mathcal{P}(\ot\setminus R)=N.\label{rt67}
					\end{equation}
				\end{prop}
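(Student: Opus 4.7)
The plan is to establish $\mathcal{P}^N(\ot\setminus R)=0$, which by the property $\mathcal{P}^s(E)<\infty\Rightarrow\mathcal{P}^{s+\varepsilon}(E)=0$ stated earlier yields $\dim_{\mathcal{P}}(\ot\setminus R)\le N$. A preliminary observation is that whenever $\lim_{r\to 0}E_r(z)=0$ the telescoping estimate from the proof of Corollary 1.1 forces $\{m_{z,\delta^k r_0}\}_k$ to be Cauchy in $\mathbb{R}^N$, while $|m_{z,r_0}|$ is bounded at any fixed scale $r_0$ by $\|m\|_{L^1(Q_{r_0}(z))}/|Q_{r_0}|$; hence $\sup_{r>0}|m_{z,r}|<\infty$ automatically. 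Consequently
$$\ot\setminus R\subset\{z\in\ot:\limsup_{r\to 0^+}E_r(z)>0\},$$
and since the $r^{2\beta}$ contribution to $E_r$ vanishes pointwise, it is enough to bound the $\mathcal{P}^N$-measure of
$$\Sigma_1=\Big\{z:\limsup_{r\to 0}\tfrac{1}{r^{N+2}}\!\int_{Q_r(z)}|m-m_{z,r}|^2\,dx\,dt>0\Big\},\qquad \Sigma_2=\{z:\limsup_{r\to 0}A_r(z)>0\}.$$

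For $\Sigma_1$ I decompose $m-m_{z,r}=(m-m_{y,r}(t))+(m_{y,r}(t)-m_{z,r})$. Because the two pieces are $L^2(Q_r(z))$-orthogonal (the inner product vanishes via the $B_r(y)$-average), the spatial Poincar\'e inequality on $B_r(y)$ bounds the first piece by $Cr^2\int_{Q_r(z)}|\nabla m|^2\,dx\,dt$, and the one-dimensional Poincar\'e inequality in $t$, combined with Jensen's inequality for $\partial_t m_{y,r}(t)=\avint_{B_r(y)}\partial_t m\,dx$, bounds the second by $Cr^4\int_{Q_r(z)}|\partial_t m|^2\,dx\,dt$. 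Summing and dividing by $r^{N+2}$ gives, for $r\le 1$,
$$\tfrac{1}{r^{N+2}}\int_{Q_r(z)}|m-m_{z,r}|^2\,dx\,dt\le \tfrac{C}{r^N}\int_{Q_r(z)}\bigl(|\nabla m|^2+|\partial_t m|^2\bigr)\,dx\,dt.$$
Since $|\nabla m|^2,|\partial_t m|^2\in L^1(\ot)$ by (D1), Lemma 1.2 with $s=N$ immediately delivers $\mathcal{P}^N(\Sigma_1)=0$.

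The genuine obstacle is $\Sigma_2$. Spatial Poincar\'e on each time slice gives
$$A_r(z)\le \tfrac{C}{r^{N-2}}\max_{t\in[\tau-\frac12 r^2,\tau+\frac12 r^2]}\int_{B_r(y)}|\nabla p(x,t)|^2\,dx,$$
and the pointwise-in-$t$ maximum blocks a direct application of Lemma 1.2 to $|\nabla p|^2$. To circumvent this I adapt the decomposition of $p$ developed for Proposition 1.2: on each time slice split $p(\cdot,t)=p^{(1)}(\cdot,t)+p^{(2)}(\cdot,t)$, where $p^{(1)}$ solves an auxiliary elliptic equation with frozen smooth coefficients, so that standard Campanato-type estimates yield an $r^2$-gain for its contribution to $A_r(z)$. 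The remainder $p^{(2)}$ satisfies an elliptic equation whose right-hand side is controlled in $L^1(\ot)$ by quantities built from $|\nabla m|^2|\nabla p|^2$ and $|S|^2$, while its time regularity is inherited from that of the coefficients $I+m\otimes m$ through $\partial_t m\in L^2$ together with the continuity $p\in C([0,T];L^2(\Omega))$ supplied by Proposition 1.1. Combining these ingredients promotes the pointwise-in-$t$ estimate into a bound of the form
$$\max_{t\in[\tau-\frac12 r^2,\tau+\frac12 r^2]}\int_{B_r(y)}|\nabla p^{(2)}(x,t)|^2\,dx\le \int_{Q_r(z)}g(x,t)\,dx\,dt$$
for some fixed $g\in L^1(\ot)$; a final application of Lemma 1.2 with $f=g$ and $s=N$ (admissible because $N<N+2$) then gives $\mathcal{P}^N(\Sigma_2)=0$ and completes the proof. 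The decisive technical difficulty is precisely this conversion of $\max_t$ into a spacetime integral: without the $p$-decomposition and the time-continuity from Proposition 1.1 one obtains only pointwise-in-$t$ decay, which is insufficient for $A_r(z)\to 0$.
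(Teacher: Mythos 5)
Two of your steps contain genuine gaps. First, your reduction $\Omega_T\setminus R\subset\Sigma_1\cup\Sigma_2$ is not justified: membership in $R$ as defined in (\ref{rt66}) also requires $\sup_{r>0}|m_{z,r}|<\infty$, and your claim that this is ``automatic'' once $\lim_{r\to 0}E_r(z)=0$ does not follow from the telescoping estimate of Corollary 1.1. That estimate makes $\{m_{z,\delta^k r_0}\}$ Cauchy only if $\sum_k E_{\delta^k r_0}(z)^{1/2}$ converges, and mere decay $E_r(z)\to 0$ gives no summability (oscillation decaying like $1/\log(1/r)$ is compatible with averages drifting like $\log\log(1/r)$). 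This is precisely why the paper works inside the sets $H_\varepsilon$ of (\ref{me4}): there the quantitative rate $\rho^{-(N+\varepsilon)}\int_{Q_\rho(z)}(|\nabla m|^2+|\partial_t m|^2)\,dxdt\to 0$ yields $|\frac{d}{d\rho}m_{z,\rho}|\le c\rho^{\varepsilon/2-1}$, an integrable bound (Claim 3.1); at your borderline exponent $s=N$ only $\limsup$-vanishing with no rate is available, and no such integrable bound follows.

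Second, and more seriously, your treatment of $\Sigma_2$ is an assertion rather than a proof. The pivotal claimed inequality $\max_t\int_{B_r(y)}|\nabla p^{(2)}|^2dx\le\int_{Q_r(z)}g\,dxdt$ for a fixed $g\in L^1(\Omega_T)$ is never derived, and the available estimates do not support it: the correction term's source involves $(m\cdot\nabla p)(m-m_{y,r}(t))$, whose slice-wise norms are not dominated by any fixed $L^1$ space--time function, and $|\nabla m|^2|\nabla p|^2$ is not known to be in $L^1(\Omega_T)$ (that would require $\nabla m\cdot\nabla p\in L^2$, which (D1) does not give); Proposition 1.1 gives continuity of $t\mapsto p(\cdot,t)$ in $L^2$ but no modulus, so no quantitative time regularity can be ``inherited''. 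The paper's actual proof of Claim 3.4 needs exactly the machinery you skip: the local estimate (\ref{pe4}) to bound $\max_t\int_{B_r}(|\nabla p|^2+(m\cdot\nabla p)^2)dx$, the uniform-in-$t$ higher integrability of $w_r=m-m_{y,r}(t)$ (Claim 3.3, which uses the $|m|^d$ and $|\partial_t m|^2$ terms in the definition of $H_\varepsilon$), the $W^{1,s}$ elliptic estimate for some $s>1$ (it fails at $s=1$, which is the whole point), the $L^\infty$ bound on $p$, and Giaquinta's iteration lemma converting the decay of $\sigma(\rho)$ into $A_\rho(z)\le c\rho^{\varepsilon_1}$. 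All of these exploit the polynomial rates $cr^\varepsilon$ valid on $H_\varepsilon$; at $s=N$ those rates are gone, so even granting your $\Sigma_1$ computation (which is fine and matches Claim 3.2), your target $\mathcal{P}^N(\Omega_T\setminus R)=0$ is strictly stronger than what the paper establishes ($\mathcal{P}^{N+\varepsilon}(\Omega_T\setminus R)=0$ for every $\varepsilon>0$, i.e.\ $\dim_{\mathcal{P}}\le N$) and is not reached by your argument.
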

				
				The proof of this proposition relies on almost the same decomposition of $p$ as that in the proof of Proposition 1.2. The details will be given in Section 3.
				
			Thus Theorem 1.1 is a consequence of Propositions \ref{prop1.1}-\ref{prop1.3}.
			The rest of the  paper is organized as follows.  In Section 2, we develop some new global estimates. They serve as a motivation for our local estimates. The section will end with the proof of Proposition 1.1.  In Section 3, we will first establish some local estimates and then proceed to prove  Proposition 1.3. Section 4 is devoted to the proof of Proposition 1.2. Note that the three propositions are independent, and thus the order of their
			proofs is not important.
				
				\section{Global Estimates}
		 In this section, we first summarize the main a priori estimates already established in \cite{HMP}. Then we present our new global estimates. The proof of Proposition 1.1 is given at the end.
		
		To begin with, we use $p(x,t)$ as a test function in (\ref{e1}) to obtain
		\begin{equation}
		\int_{\Omega} |\nabla p|^2dx+\int_{\Omega} \mnp^2dx=\int_{\Omega} S(x)pdx.\label{f1}
		\end{equation}
		Here and in what follows we suppress the dependence of $p, m$ on $(x,t)$ for simplicity of notation if no confusion arises.
		Let $\tau\in (0,T), \ \Omega_\tau=\Omega\times (0,\tau)$. Take the dot product of both sides of (\ref{e2}) with $m$, integrate the resulting equation over $\Omega_\tau$, and thereby yield
		\begin{eqnarray}
		\lefteqn{\frac{1}{2}\int_{\Omega}|m(x,\tau)|^2dx+D^2\int_{\Omega_\tau}|\nabla m|^2dxdt}\nonumber\\
		&&-E^2\int_{\Omega_\tau} \mnp^2dxdt+\int_{\Omega_\tau}|m|^{2\gamma}dxdt=	\frac{1}{2}\int_{\Omega}|m_0|^2dx,\label{f2}
		\end{eqnarray}
		where $|\nabla m|^2=|\nabla\otimes m|^2=\sum_{i,j=1}^{N}(\frac{\partial m_j}{\partial x_i})^2$.
		Multiply through (\ref{f1}) by $2E^2$, integrate over $(0,\tau)$, and then add it to (\ref{f2}) to arrive at
		\begin{eqnarray}
			\lefteqn{\frac{1}{2}\int_{\Omega}|m(x,\tau)|^2dx+D^2\int_{\Omega_\tau}|\nabla m|^2dxdt+E^2\int_{\Omega_\tau} \mnp^2dxdt}\nonumber\\
			&&+\int_{\Omega_\tau}|m|^{2\gamma}dxdt+2E^2	\int_{\Omega_\tau} |\nabla p|^2dxd\tau\nonumber\\
			&=&\frac{1}{2}\int_{\Omega}|m_0|^2dx+2E^2\int_{\Omega_\tau} S(x)pdxdt.\label{f3}
		\end{eqnarray}
		Take the dot product of (\ref{e2}) with $\partial_tm$ and integrate the resulting equation over $\Omega$ to obtain
		\begin{eqnarray}
		\lefteqn{\int_{\Omega}|\partial_tm|^2dx+\frac{D^2}{2}\frac{d}{dt}\int_{\Omega}|\nabla m|^2dx}\nonumber\\
		&&-E^2\int_{\Omega}\mnp \nabla p\partial_tm dx+\frac{1}{2\gamma}\frac{d}{dt}\int_{\Omega}|m|^{2\gamma}dx=0.\label{f4}
		\end{eqnarray}
		Use $\partial_tp$ as a test function in (\ref{e1}) to derive
		\begin{equation}
		\frac{1}{2}\frac{d}{dt}\int_{\Omega}|\nabla p|^2dx+\int_{\Omega}\mnp m\nabla\partial_tp dx=\int_{\Omega} S(x)\partial_tp dx.
		\end{equation}
		Multiply through this equation by$-E^2$ and add the resulting one to (\ref{f4}) to obtain
		\begin{eqnarray}
		\lefteqn{\int_{\Omega}|\partial_tm|^2dx+\frac{D^2}{2}\frac{d}{dt}\int_{\Omega}|\nabla m|^2dx-\frac{E^2}{2}\frac{d}{dt}\int_{\Omega}|\mnp^2 dx}\nonumber\\
		&&-\frac{E^2}{2}\frac{d}{dt}\int_{\Omega}|\nabla p|^2dx+\frac{1}{2\gamma}\frac{d}{dt}\int_{\Omega}|m|^{2\gamma}dx=-E^2\int_{\Omega} S(x)\partial_tp dx.
		\end{eqnarray}
		Differentiate (\ref{f1}) with respect to $t$, multiply through the resulting equation by $E^2$, then add it to the above equation, and thereby deduce
		\begin{eqnarray}
		\lefteqn{\int_{\Omega_\tau}|\partial_tm|^2dxdt+\frac{D^2}{2}\int_{\Omega}|\nabla m(x,\tau)|^2dx+\frac{E^2}{2}\int_{\Omega}\mnp^2 dx}\nonumber\\
		&&+\frac{E^2}{2}\int_{\Omega}|\nabla p|^2dx+\frac{1}{2\gamma}\int_{\Omega}|m|^{2\gamma}dx\nonumber\\
		&=&\frac{D^2}{2}\int_{\Omega}|\nabla m_0|^2dx+\frac{E^2}{2}\int_{\Omega}(m_0\cdot \nabla p_0)^2dx+\frac{1}{2\gamma}\int_{\Omega}|m_0|^{2\gamma}dx\nonumber\\
		&&+\frac{E^2}{2}\int_{\Omega}|\nabla p_0|^2dx,\label{f5}
		\end{eqnarray}
		where $p_0$ is the solution of the boundary value problem
		\begin{eqnarray}
		-\mbox{div}[(I+m_0\otimes m_0)\nabla p_0] &=& S(x),
		 \ \ \ \mbox{in $\Omega$,}\\
		 p_0&=& 0\ \ \ \mbox{on $\partial\Omega$.}
		\end{eqnarray}
		
 Local versions of (\ref{f1}) and (\ref{f3}) will be established in Section 3. Unfortunately, they are not enough to yield a partial regularity result. Naturally, one tries to seek  a local version
		of (\ref{f5}). But this cannot be done because we have no control over $\partial_t p$. 
		To partially circumvent this, we have developed  some new estimates.
		\begin{prop}\label{l1}
			Let (H1) and (H2) be satisfied and $(m, p)$ a weak solution of (\ref{e1})-(\ref{e4}). Then:
			\begin{enumerate}
				\item[(C1)] There is a positive number $c=c(\Omega, N)$ such that  $\|p\|_{\infty,\Omega_T}\equiv\mbox{ess sup}_{\Omega_T}|p|\leq c\|S(x)\|_{q, \Omega}$, where $\|\cdot\|_{q, \Omega}$ denotes the norm in $L^q(\Omega)$.
				We shall write $\|\cdot\|_s$ for $\|\cdot\|_{s.\Omega}$ for simplicity;
				\item[(C2)] For each $K>0$ we can choose $\beta\in (0,1)$ suitably small such that
				\begin{eqnarray*}
				\lefteqn{\int_{\Omega} \int_{0}^{|m(x,\tau)|^2}[(s-K^2)^++K^2]^\beta dsdx+\int_{\Omega_\tau} v^\beta|\nabla m|^2dxdt}\nonumber\\
				&&+\int_{\Omega_\tau} v^{\beta-1}|\nabla v|^2dxdt+\int_{\Omega_\tau}|m|^{2\gamma}v^\beta dxdt\nonumber\\
				&&+\int_{\Omega_\tau} v^\beta|\nabla p|^2dxdt+\int_{\Omega_\tau} v^\beta\mnp^2dxdt\nonumber\\
				&\leq&c\int_{\Omega_\tau}| S(x)|v^\beta  dxdt+\int_{\Omega} \int_{0}^{|m_0|^2}[(s-K^2)^++K^2]^\beta dsdx+c\ \ \mbox{for all $\tau\in (0,T)$},
				\end{eqnarray*}
				where 
				\begin{equation}
				v=(|m|^2-K^2)^++K^2\geq K^2.\label{vd}
				\end{equation}
			\end{enumerate}
		\end{prop}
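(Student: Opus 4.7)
I would split the proof into two independent parts corresponding to (C1) and (C2).

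\textbf{Part (C1).} Although $A=I+m\otimes m$ lacks a uniform upper bound, it satisfies the one-sided ellipticity $A\xi\cdot\xi\ge|\xi|^2$, which is all one needs for an $L^{\infty}$ bound by the Stampacchia/De~Giorgi truncation method. For a.e.\ $t$ I would test (\ref{e1}) with $(p(\cdot,t)-k)^{+}\in W^{1,2}_{0}(\Omega)$, obtaining
$\int|\nabla(p-k)^{+}|^{2}\le\int_{\{p>k\}}S(p-k)^{+}$, then apply Sobolev's embedding together with H\"{o}lder's inequality to exploit the assumption $q>N/2$. This yields the standard nonlinear recursion on $|\{p(\cdot,t)>k\}|$ which forces $\|p^{+}(\cdot,t)\|_{\infty}\le c(N,\Omega,q)\|S\|_{q}$; the same argument applied to $-p$ completes the bound. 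This part is classical.

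\textbf{Part (C2): setup.} The plan is to combine two weighted energy identities: test (\ref{e2}) with $v^{\beta}m$ and test (\ref{e1}) with $2E^{2}v^{\beta}p$, then add and integrate over $\Omega_\tau$. (Admissibility requires a routine mollification of $m$ in $t$ plus approximation of $v$; I would include a brief remark to this effect.) Setting $G(u)=\int_{0}^{u}[(s-K^{2})^{+}+K^{2}]^{\beta}ds$, one has $G'(|m|^{2})=v^{\beta}$, so $\int_\Omega v^{\beta}m\cdot\partial_t m\,dx=\tfrac{1}{2}\tfrac{d}{dt}\int_\Omega G(|m|^{2})\,dx$, producing the first term on the LHS of (C2) after integration in time. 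Integration by parts in the Laplacian term, together with the identity $(\nabla m\cdot m)\cdot\nabla v=\tfrac{1}{2}|\nabla v|^{2}$ (valid a.e.\ by the chain rule for the truncation $v=\max(|m|^{2},K^{2})$), generates $D^{2}v^{\beta}|\nabla m|^{2}$ and $\tfrac{\beta D^{2}}{2}v^{\beta-1}|\nabla v|^{2}$, while the reaction term contributes $v^{\beta}|m|^{2\gamma}$ and the cubic term contributes $-E^{2}v^{\beta}(m\cdot\nabla p)^{2}$. Testing (\ref{e1}) with $2E^{2}v^{\beta}p$ gives $2E^{2}v^{\beta}[|\nabla p|^{2}+(m\cdot\nabla p)^{2}]$ on the LHS (so the bad sign of the cubic term is precisely cancelled, leaving a positive $E^{2}v^{\beta}(m\cdot\nabla p)^{2}$), plus the cross terms $2E^{2}\beta\int v^{\beta-1}p\,\nabla v\cdot[\nabla p+(m\cdot\nabla p)m]$ on the LHS and $2E^{2}\int Sv^{\beta}p$ on the RHS.

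\textbf{Main obstacle: absorbing the cross terms.} This is the only delicate point. Using (C1) to bound $|p|\le\|p\|_{\infty}\le c\|S\|_{q}$, and the pointwise inequality $v\ge K^{2}>0$ to write $v^{\beta-1}|\nabla p|^{2}\le K^{-2}v^{\beta}|\nabla p|^{2}$ and $v^{\beta-1}|m||\nabla v||m\cdot\nabla p|\le v^{\beta-1/2}|\nabla v||m\cdot\nabla p|$, Young's inequality with parameter $\delta$ produces a bound on the absolute value of the cross terms of the form
\begin{equation*}
cE^{2}\beta\|p\|_{\infty}\delta\!\int\! v^{\beta-1}|\nabla v|^{2}+\frac{cE^{2}\beta\|p\|_{\infty}}{\delta K^{2}}\!\int\! v^{\beta}|\nabla p|^{2}+\frac{cE^{2}\beta\|p\|_{\infty}}{\delta}\!\int\! v^{\beta}(m\cdot\nabla p)^{2}.
\end{equation*}
I would first fix $\delta$ so small that $cE^{2}\|p\|_{\infty}\delta\le D^{2}/4$, which absorbs the first term into $\tfrac{\beta D^{2}}{2}\int v^{\beta-1}|\nabla v|^{2}$, and then choose $\beta=\beta(K,D,E,\|S\|_{q})$ small enough that the remaining two coefficients fall below $E^{2}/2$, so the last two are absorbed into $2E^{2}\int v^{\beta}|\nabla p|^{2}$ and $E^{2}\int v^{\beta}(m\cdot\nabla p)^{2}$ respectively; this is the step that forces $\beta$ small and depends crucially on (C1) giving a quantitative $\|p\|_\infty$. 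The RHS term is bounded by $2E^{2}\|p\|_{\infty}\int|S|v^{\beta}$, producing the stated $c\int|S|v^{\beta}$. The initial datum gives $\int_\Omega\int_0^{|m_0|^2}[(s-K^{2})^{+}+K^{2}]^{\beta}ds\,dx$, and any remaining universal constants are absorbed into the additive $+c$.
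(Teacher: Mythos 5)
Your proposal is correct and follows essentially the same route as the paper: (C1) via De Giorgi--Stampacchia truncation exploiting $q>\frac{N}{2}$, and (C2) by testing (\ref{e2}) with $v^{\beta}m$ and (\ref{e1}) with $2E^{2}v^{\beta}p$, using $v\geq K^{2}$, the vanishing of $\nabla v$ where the truncation is inactive, the $L^{\infty}$ bound on $p$ from (C1), and smallness of $\beta$ to absorb the cross terms. The only (cosmetic) difference is that the paper implements your admissibility remark concretely through the bounded truncation $v_{L}=\theta_{L}(|m|^{2})$ followed by the limit $L\rightarrow\infty$.
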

		By the Sobolev embedding theorem, we have
		$$m\in L^\infty(0,T; L^{\frac{2N}{N-2}}(\Omega)).$$
		Thus the first integral on the right-hand side of the above inequality is finite.
	\begin{proof} The proof of (C1) is standard. See, e.g., (\cite{D}, p. 131). For the reader's convenience, we shall reproduce the proof here. Let $\kappa$ be a positive number to be determined. Write
		$$\kappa_n=\kappa-\frac{\kappa}{2^n},\ \ \ A_n(t)=\{x\in\Omega: p(x,t)>\kappa_n\},\ \ \ n=0, 1,2,\cdots.$$
	Use $(p-\kappa_n)^+$ as a test function in (\ref{e1}) to deduce
	\begin{eqnarray}
	\lefteqn{\int_{\Omega}|\nabla (p-\kappa_n)^+|^2dx+\int_{\Omega}|(m\cdot\nabla(p-\kappa_n)^+)^2dx}\nonumber\\
	&=&\int_{\Omega} S(x)(p-\kappa_n)^+dx\nonumber\\
	&\leq&\left(\int_{A_n(t)}|S(x)|^{\frac{2N}{N+2}}\right)^{\frac{N+2}{2N}}\|(p-\kappa_n)^+\|_{\frac{2N}{N-2}}\nonumber\\
	&\leq&c\|S(x)\|_q|A_n(t)|^{\frac{N+2}{2N}-\frac{1}{q}}\|\nabla(p-\kappa_n)^+\|_2,
	\end{eqnarray}	
	from whence follows
	\begin{equation}
	|A_{n+1}(t)|\leq c\|S(x)\|_q^{\frac{2N}{N-2}}\frac{2^{\frac{2Nn}{N-2}}}{\kappa^{\frac{2N}{N-2}}}|A_{n}(t)|^{1+\frac{2}{N-2}(2-\frac{N}{q})}.
	\end{equation}
	By (H1), we have $\alpha\equiv \frac{2}{N-2}(2-\frac{N}{q})>0$. This enables us to apply Lemma 4.1 in (\cite{D}, p. 12) to obtain
	$$|A_\infty(t)|=0,\ \ \ \mbox{provided that $\kappa=c\|S(x)\|_q$ for some $c=c(\Omega, N)$.}$$
	This implies (C1).
	
	Let $K>0, \beta>0 $ be given and $v$ be defined as in (\ref{vd}). For $L>K$, define
	\begin{equation}\label{cutoff}
	\theta_L(s)=\left\{\begin{array}{ll}
	L^2&\mbox{if $s\geq L^2$,}\\
	s&\mbox{if $K^2<s< L^2$,}\\
	K^2 &\mbox{if $s\leq K$.}
	\end{array}\right.
	\end{equation}
	Set $v_L=\theta_L(|m|^2)$.
	Then the function $v_L^\beta m$ is a legitimate test function for (\ref{e2}). Upon using it, we arrive at
	\begin{eqnarray}
	\lefteqn{\frac{1}{2}\frac{d}{dt}\int_{\Omega} \int_{0}^{|m|^2}[\theta_L(s)]^\beta dsdx+D^2\int_{\Omega} v_L^\beta|\nabla m|^2dx}\nonumber\\
	&&+\frac{D^2\beta}{2}\int_{\Omega} v_L^{\beta-1}|\nabla v_L|^2+\int_{\Omega}|m|^{2\gamma}v_L^\beta dx\nonumber\\
	&=&E^2\int_{\Omega} v_L^\beta\mnp^2dx.\label{n1}
		\end{eqnarray}
		In the derivation of the third term above,  we have used the fact that
		\begin{equation}\label{rs11}
		\nabla v_L=0\ \ \mbox{on the set where $|m|^2>L^2$ or $|m|^2<K^2$}.
		\end{equation}
		Use $v_L^\beta p$ as a test function in (\ref{e1}) to deduce
		\begin{eqnarray}
		\lefteqn{\int_{\Omega} v_L^\beta|\nabla p|^2dx+\int_{\Omega} v_L^\beta\mnp^2dx}\nonumber\\
		&=&-\int_{\Omega} \nabla p p\beta v_L^{\beta-1}\nabla v_L dx\nonumber\\
		&&-\int_{\Omega} \mnp mp\beta v_L^{\beta-1}\nabla v_L dx+\int_{\Omega} S(x)v_L^\beta p dx\nonumber\\
		&\leq & \varepsilon \beta\int_{\Omega} v_L^{\beta-1}|\nabla v_L|^2dx+ c(\varepsilon)\beta\int_{\Omega} v_L^{\beta-1}p^2|\nabla p|^2 dx\nonumber\\
		&&+\varepsilon\int_{\Omega} v_L^\beta\mnp^2dx+c(\varepsilon)\beta^2\int_{\Omega} v_L^{\beta-2}|m|^2p^2|\nabla v_L|^2dx\nonumber\\
		&&+\int_{\Omega} S(x)v_L^\beta p dx,\ \ \ \varepsilon>0.
		\end{eqnarray}
		By virtue of (\ref{rs11}), we have that $v_L^{\beta-2}|m|^2|\nabla v_L|^2=v_L^{\beta-1}|\nabla v_L|^2$. Remember that $\beta\in (0,1)$. This gives $ v_L^{\beta-1}p^2\leq \|p\|_\infty^2K^{2(\beta-1)}$. Multiply through the above inequality by $2E^2$, add the resulting inequality to 
		(\ref{n1}), thereby obtain
		\begin{eqnarray}
		\lefteqn{\frac{d}{dt}\int_{\Omega} \int_{0}^{|m|^2}[\theta_L(s)]^\beta dsdx+\int_{\Omega} v_L^\beta|\nabla m|^2dx}\nonumber\\
		&&+\beta\int_{\Omega} v_L^{\beta-1}|\nabla v_L|^2dx+\int_{\Omega}|m|^{2\gamma}v_L^\beta dx\nonumber\\
		&&+\int_{\Omega} v_L^\beta|\nabla p|^2dx+\int_{\Omega} v_L^\beta\mnp^2dx\nonumber\\
		&\leq&c\beta\frac{\|p\|_\infty^2}{K^2} \int_{\Omega} v_L^\beta|\nabla p|^2dx+c\beta^2\|p\|_\infty^2\int_{\Omega} v_L^{\beta-1}|\nabla v_L|^2dx\nonumber\\
		&& +\int_{\Omega} S(x)v_L^\beta p dx.
		\end{eqnarray}
		Choosing $\beta$ sufficiently small so that the second term on the right-hand in the above inequality can be absorbed into the third term on the left-hand side there,  integrating the resulting inequality with respect to $t$, and then taking $L\rightarrow \infty$ yields (C2). The proof is complete.
			\end{proof}
			
			It turns out that 
			a local version of (C2) is possible only if $N\leq 3$. This accounts for the restriction on the space dimension in Theorem 1.1.
		
		 	At the end of this section,  we present the proof of Proposition \ref{prop1.1}. 
		 	
		 	\begin{proof}[Proof of Proposition \ref{prop1.1}]
		 	It is easy to see that $m(x,t)\in C([0,T]; \left(L^2(\Omega)\right)^N)$. By the proof of Lemma 2.3 in \cite{X4}, we can conclude that for each $t\in [0,T]$ there is a unique weak solution $p=p(x,t)$ in the space $W^{1,2}_0(\Omega)$ to (\ref{e1}) with $m(x,t)\cdot\nabla p(x,t)\in L^2(\Omega)$. Fix a $t^*$ in $[0,T]$. Let $\{t_j\}$ be a sequence in $ [0,T]$ with the property
		 	\begin{equation}
		 	t_j\rightarrow t^*.
		 	\end{equation}
		 	Set
		 	$m_j=	m(x, t_j)$ and denote by $ p_j$ the solution of (\ref{e1}) with
		 	$m$ being replaced by $m_j$.
		 	Obviously,  we have
		 	\begin{equation}\label{r111}
		 	m_j\rightarrow m^*\equiv m(x,t^*) \ \ \ \mbox{strongly in $\left(L^2(\Omega)\right)^N$ as $j\rightarrow \infty$.}
		 	\end{equation}
		 	We claim that we also have
		 	\begin{equation}
		 	p_j\rightarrow p^*\equiv p(x,t^*),\ \mbox{the solution of \eqref{e1} corresponding to $t=t^*$}, \ \ \ \mbox{strongly in $L^2(\Omega)$ as $j\rightarrow \infty$,}
		 	\end{equation}
		 	and this will be enough to imply the proposition. To see this, note that
		 	$m_j\otimes m_j\nabla p_j=(m_j\cdot\nabla p_j)m_j$, and thus we have the equation
		 	\begin{equation}\label{r333}
		 	-\mbox{div}(\nabla p_j+(m_j\cdot\nabla p_j)m_j)=S(x)\ \ \ \mbox{in $\Omega$.}
		 	\end{equation}
		 	Using $p_j$ as a test function, we can easily derive
		 	\begin{equation}\label{r222}
		 	\io|\nabla p_j|^2dx+\io(m_j\cdot\nabla p_j)^2dx\leq c\io |S(x)|^2dx.
		 	\end{equation}
		 	Thus we may assume that
		 	\begin{equation}
		 	p_j\rightharpoonup p \ \ \mbox{weakly in $W^{1,2}_0(\Omega)$ and strongly in $L^2(\Omega)$ }
		 	\end{equation}
		 	(passing to a subsequence if need be.)
		 	This together with (\ref{r111}) implies
		 	$$m_j\cdot\nabla p_j\rightharpoonup m^*\cdot \nabla p \ \ \mbox{weakly in $L^1(\Omega)$, and therefore also weakly in $L^2(\Omega)$.}$$
		 	Subsequently, we have 
		 	$$(m_j\cdot\nabla p_j)m_j\rightharpoonup ( m^*\cdot \nabla p)m^* \ \ \mbox{weakly in $\left(L^1(\Omega)\right)^N$.}$$ Thus we can take $j\rightarrow\infty$ in (\ref{r333}) to obtain
		 	\begin{equation}
		 	-\mbox{div}(\nabla p+(m^*\cdot\nabla p)m^*)=S(x)\ \ \ \mbox{in $\Omega$.}
		 	\end{equation}
		 	The solution to this equation is unique in $W^{1,2}_0(\Omega)$, and therefore $p=p^*$
		 	and the whole sequence $\{p_j\}$ tends to $p^*$ strongly in $L^2(\Omega)$. The proof is complete.
		 	\end{proof}	

		\section{Local estimates}
		
		In this section we begin with a derivation of local versions of (\ref{f1}) and (\ref{f3}). Then we proceed to prove Proposition 1.3. 
	
		Let $z=(y,\tau)\in\ot, r>0$ with $\qzr\subset\ot$ be given. Pick a $C^\infty$ function $\xi$ on $\mathbb{R}^{N+1}$ satisfying
		\begin{eqnarray*}
		\xi&=& 1\ \ \ \mbox{on $Q_{\frac{1}{2}r}(z)$},\\
		\xi&=& 0\ \ \ \mbox{off $Q_{r}(z)$},\\	
		0\leq&\xi&\leq 1\ \ \ \mbox{on $Q_{r}(z)$},\\
		|	\partial_t\xi|&\leq&\frac{c}{r^2},\\
		|	\nabla\xi|&\leq&\frac{c}{r}.
		\end{eqnarray*}
		Note that $m\otimes m\nabla p=\mnp m$. Keep this in mind, while using $\xi^2(p-\prt)$ as a test function in (\ref{e1}),  to obtain
		\begin{eqnarray}
		\lefteqn{\int_{B_r(y)}|\nabla p|^2\xi^2dx+\int_{B_r(y)}\mnp^2\xi^2dx}\nonumber\\
		&\leq&\frac{c}{r^2}\int_{B_r(y)}|p-\prt|^2dx+\frac{c}{r^2}\int_{B_r(y)}|m|^2|p-\prt|^2dx\nonumber\\
		&&+\int_{B_r(y)} |S(x)|\xi^2|p-\prt|dx.\label{pe1}
		\end{eqnarray}
		Set $M_0=\mbox{ess sup}_{\Omega_T}|p(x,t)|$. Then the fourth integral in (\ref{pe1})
		can be estimated as follows:
		\begin{eqnarray}
		\int_{B_r(y)}|m|^2|p-\prt|^2dx&\leq&2\int_{B_r(y)}|m-\mz|^2|p-\prt|^2dx\nonumber\\
		&&+2|\mz|^2\int_{B_r(y)}|p-\prt|^2dx\nonumber\\
		&\leq&8M_0^2\int_{B_r(y)}|m-\mz|^2dx\nonumber\\
		&&+2|\mz|^2\int_{B_r(y)}|p-\prt|^2dx.\label{pe2}
		\end{eqnarray}
		We apply Poincar\'{e}'s inequality to the last integral in (\ref{pe1}) to yield
		\begin{eqnarray}
		\int_{B_r(y)} |S(x)|\xi^2|p-\prt|dx&\leq&\left(\int_{B_r(y)}|S(x)|^{\frac{2N}{N+2}}\right)^{\frac{N+2}{2N}}\nonumber\\
		&&\cdot\left(\int_{B_r(y)}|\xi(p-\prt)|^{\frac{2N}{N-2}}\right)^{\frac{N-2}{2N}}\nonumber\\
		&\leq& \left(\int_{B_r(y)}|S(x)|^{\frac{2N}{N+2}}\right)^{\frac{N+2}{2N}}\nonumber\\
		&&\cdot\left(\frac{c}{r^2}\int_{B_r(y)}|p-\prt|^2dx+\int_{B_r(y)} \xi^2|\nabla p|^2dx\right)^{\frac{1}{2}}\nonumber\\
		&\leq&\varepsilon \int_{B_r(y)} \xi^2|\nabla p|^2dx+\frac{c\varepsilon}{r^2}\int_{B_r(y)}|p-\prt|^2dx\nonumber\\
		&&+c(\varepsilon)r^{N+2-\frac{2N}{q}}\label{pe3}
		\end{eqnarray}
		for each $\varepsilon>0$. Use (\ref{pe3}) and (\ref{pe2}) in (\ref{pe1}), choose
		$\varepsilon$ sufficiently small in the resulting inequality, and thereby arrive at
		\begin{eqnarray}
		\lefteqn{\int_{B_r(y)}|\nabla p|^2\xi^2dx+\int_{B_r(y)}\mnp^2\xi^2dx}\nonumber\\
		&\leq&\frac{c(1+|\mz|^2)}{r^2}\int_{B_r(y)}|p-\prt|^2dx+\frac{c}{r^2}\int_{B_r(y)}|m-\mz|^2dx\nonumber\\
		&&+cr^{N+2-\frac{2N}{q}}.\label{pe4}
		\end{eqnarray}
			Now we use $(m-\mz)\xi^2$ as a test function in (\ref{e2}) to obtain
		\begin{eqnarray}
		\lefteqn{\frac{d}{dt}\int_{B_r(y)}\frac{1}{2}|m-\mz|^2\xi^2dx+c\int_{B_r(y)} |\nabla m|^2\xi^2dx+\int_{B_r(y)}|m|^{2\gamma}\xi^2 dx}\nonumber\\
		&\leq& \frac{c}{r^2}\int_{B_r(y)}|m-\mz|^2dx+E^2\int_{B_r(y)}\mnp^2\xi^2dx\nonumber\\
		&&+c|\mz|^2\int_{B_r(y)}|\nabla p|^2\xi^2dx+\mz\int_{B_r(y)}|m|^{2(\gamma-1)}m\xi^2 dx.\label{me1}
		\end{eqnarray}
In view of the interpolation inequality (\cite{GT}, p. 145), we have
\begin{equation}
\left|\mz\int_{B_r(y)}|m|^{2(\gamma-1)}m\xi^2 dx\right|\leq \varepsilon\int_{B_r(y)}|m|^{2\gamma}\xi^2 dx+c(\varepsilon)|\mz|^{2\gamma}r^N, \ \ \ \varepsilon>0.\label{me2}
\end{equation}	
Substitute (\ref{me2})	into (\ref{me1}), choose $\varepsilon$ so small in the resulting inequality that the second integral in (\ref{me2}) can be absorbed into the third term in (\ref{me1}), then integrate
with respect to $t$ to yield
\begin{eqnarray}
\lefteqn{\maxth\ibh\frac{1}{2}|m-\mz|^2dx}\nonumber\\
&&+c\iqh |\nabla m|^2dxdt+\iqh|m|^{2\gamma} dxdt\nonumber\\
&\leq&c(|\mz|^2+1)\left(\int_{Q_{r}(z)}|\nabla p|^2\xi^2dxdt+\int_{Q_{r}(z)}\mnp^2\xi^2dxdt\right)\nonumber\\
&&+\frac{c}{r^2}\int_{Q_{r}(z)}|m-\mz|^2dxdt+c|\mz|^{2\gamma}r^{N+2}.\label{me3}
\end{eqnarray}

We are ready to prove Proposition \ref{prop1.3}. 

\begin{proof}[Proof of Proposition \ref{prop1.3}.] For each $\varepsilon>0$ we consider the set
\begin{equation}\label{me4}
H_\varepsilon=\{z\in\ot: 
\lim_{r\rightarrow 0}\frac{1}{r^{N+\varepsilon}}\int_{Q_{r}(z)}\left(|m|^{d}+|\partial_tm|^2+|\nabla m|^2+|\nabla p|^2+\mnp^2\right)dxdt=0 \},
\end{equation}
where $d=\frac{2N}{N-2}$ if $N\ne 2$ and any number bigger than $2+\frac{8}{N}$ if $N=2$.
On account of Lemma 1.1, we have	
\begin{equation}
\mathcal{P}^{N+\varepsilon}(\ot\setminus H_\varepsilon)=0.
\end{equation}
Thus it is enough for us to show
\begin{equation}
H_\varepsilon\subset R,\label{rw11}
\end{equation}
where $R$ is defined in (\ref{rt66}). We divide the proof of this  into several claims.\end{proof}
\begin{clm} If $z=(y,\tau)\in H_\varepsilon$, then we have
	\begin{equation}
	\sup_{r>0}|\mz|<\infty. 
	\end{equation}
\end{clm}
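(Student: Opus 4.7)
The strategy is a dyadic telescoping argument in the small-$r$ regime combined with a direct $L^2$-type bound in the large-$r$ regime. Fix $z = (y, \tau) \in H_\varepsilon$ and set $r_k = 2^{-k} r^*$, where $r^* > 0$ is small enough that $Q_{r^*}(z) \subset \Omega_T$ and that the $H_\varepsilon$ decay estimate becomes effective.

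In the small-$r$ regime, because $Q_{r_{k+1}}(z) \subset Q_{r_k}(z)$ with $|Q_{r_k}|/|Q_{r_{k+1}}| = 2^{N+2}$, a direct application of Cauchy--Schwarz gives
\begin{equation*}
|m_{z, r_{k+1}} - m_{z, r_k}| \leq \avint_{Q_{r_{k+1}}(z)} |m - m_{z, r_k}| \, dxdt \leq C \left( \avint_{Q_{r_k}(z)} |m - m_{z, r_k}|^2 \, dxdt \right)^{1/2}.
\end{equation*}
The key tool is then the parabolic Poincar\'{e}--Wirtinger inequality
\begin{equation*}
\int_{Q_r(z)} |m - m_{z,r}|^2 \, dxdt \leq C r^2 \int_{Q_r(z)} |\nabla m|^2 \, dxdt + C r^4 \int_{Q_r(z)} |\partial_t m|^2 \, dxdt,
\end{equation*}
which follows in standard fashion from the spatial Poincar\'{e} inequality on each time-slice together with a time-slicing argument that uses $\partial_t m \in L^2$. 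By definition of $H_\varepsilon$, for any $\eta > 0$ one can choose $r^*$ so that $\int_{Q_r(z)}(|\nabla m|^2 + |\partial_t m|^2) \, dxdt \leq \eta \, r^{N+\varepsilon}$ for all $r \leq r^*$. Substituting yields
\begin{equation*}
|m_{z, r_{k+1}} - m_{z, r_k}| \leq C \sqrt{\eta} \, r_k^{\varepsilon/2} (1 + r_k),
\end{equation*}
which is summable in $k$ since $\varepsilon > 0$. The triangle inequality then controls $\sup_{0 < r \leq r^*} |m_{z,r}|$ by $|m_{z,r^*}|$ plus the geometric-series tail, and passing from a dyadic $r_k$ to an arbitrary $r \in (r_{k+1}, r_k]$ costs only a bounded factor $2^{N+2}$.

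For $r \geq r^*$ with $Q_r(z) \subset \Omega_T$, Cauchy--Schwarz plus the global bound $m \in L^\infty(0, T; L^2(\Omega))^N$ coming from the energy estimate (\ref{f3}) immediately gives $|m_{z,r}| \leq C (r^*)^{-(N+2)/2} \|m\|_{L^2(\Omega_T)} < \infty$. Combining the two regimes establishes $\sup_{r>0} |m_{z,r}| < \infty$. The only step with real subtlety is verifying the parabolic Poincar\'{e}--Wirtinger inequality in the form stated above, since both $|\nabla m|^2$ and $|\partial_t m|^2$ are needed on the right-hand side; once it is in hand, the remainder is a routine summable telescoping, with convergence guaranteed by the fact that $\varepsilon > 0$.
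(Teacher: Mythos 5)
Your proof is correct, but it takes a different route from the paper's. The paper follows the Giaquinta-style continuous-parameter argument: after rescaling to the unit cylinder it differentiates the average $\rho\mapsto m_{z,\rho}$ under the integral sign, which produces the pointwise bound $\bigl|\tfrac{d}{d\rho}m_{z,\rho}\bigr|\leq c\rho^{-1+\varepsilon/2}$ directly from the $H_\varepsilon$-decay of $\int_{Q_\rho(z)}\bigl(|\nabla m|^2+|\partial_t m|^2\bigr)\,dxdt$, and then integrates in $\rho$ to get $|m_{z,\rho_1}-m_{z,\rho_2}|\leq c\,|\rho_1^{\varepsilon/2}-\rho_2^{\varepsilon/2}|$. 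You instead telescope over dyadic radii and control each increment by Cauchy--Schwarz together with the parabolic Poincar\'e--Wirtinger inequality $\int_{Q_r(z)}|m-m_{z,r}|^2\,dxdt\leq cr^2\int_{Q_r(z)}|\nabla m|^2\,dxdt+cr^4\int_{Q_r(z)}|\partial_t m|^2\,dxdt$; that inequality is valid here (since $\partial_t m\in L^2(\Omega_T)$ by (D1)), and it is in effect exactly what the paper establishes in the following claim (Claim 3.2) by combining the spatial Poincar\'e inequality on time slices with the bound on $\tfrac{d}{dt}m_{y,r}(t)$, so your increments $c\sqrt{\eta}\,r_k^{\varepsilon/2}(1+r_k)$ are indeed summable because $\varepsilon>0$, and the passage to non-dyadic radii costs only the factor $2^{N+2}$ as you say. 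Both arguments rest on the same input, namely only the $|\nabla m|^2$ and $|\partial_t m|^2$ portions of the $H_\varepsilon$ condition; the paper's version is a bit shorter and yields as a by-product that $m_{z,\rho}$ actually converges as $\rho\to0$, while yours is more elementary (no differentiation of the rescaled average) at the price of front-loading the Poincar\'e estimate that the paper defers to Claim 3.2. Your explicit treatment of the regime $r\geq r^*$ via the global $L^2$ bound is correct, though not essential, since for radii bounded away from zero with $Q_r(z)\subset\Omega_T$ the average is trivially bounded by $|Q_{r^*}(z)|^{-1}\|m\|_{L^1(\Omega_T)}$-type quantities.
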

\begin{proof}
We follow the argument given in (\cite{G}, p. 104). That is, we calculate
\begin{eqnarray}
\left|\frac{d}{d\rho}m_{z,\rho}\right|&=&\left|\frac{d}{d\rho}\avint_{Q_1(0)}m(y+\zeta\rho,\tau+\rho^2\omega)d\zeta d\omega\right|\nonumber\\
&=&\left|\avint_{Q_1(0)}\left(\nabla m(y+\zeta\rho,\tau+\rho^2\omega)\zeta+\partial_\omega m(y+\zeta\rho,\tau+\rho^2\omega)2\rho\omega\right)d\zeta d\omega\right|\nonumber\\
&\leq&\aizr|\nabla m|dxdt+2\aizr|\partial_tm \rho|dxdt\nonumber\\
&\leq&c\left(\frac{1}{\rho^{N+2}}\int_{Q_\rho(z)}|\nabla m|^2dxdt\right)^{\frac{1}{2}}+c\left(\frac{1}{\rho^{N}}\int_{Q_\rho(z)}|\partial_tm|^2dxdt\right)^{\frac{1}{2}}\nonumber\\
&=&\frac{1}{\rho^{1-\frac{\varepsilon}{2}}}\left[\left(\frac{1}{\rho^{N+\varepsilon}}\int_{Q_\rho(z)}|\nabla m|^2dxdt\right)^{\frac{1}{2}}+c\left(\frac{1}{\rho^{N-2+\varepsilon}}\int_{Q_\rho(z)}|\partial_tm|^2dxdt\right)^{\frac{1}{2}}\right]\nonumber\\
&\leq&\frac{c}{\rho^{1-\frac{\varepsilon}{2}}}.
\end{eqnarray}
Here and in the remainder of the proof of Proposition 1.3 the constant $c$  may depend on  $\varepsilon$ and
$z$.
It immediately follows that
\begin{eqnarray}
\left|m_{z,\rho_1}-m_{z,\rho_2}\right|&\leq&\left|\int_{\rho_1}^{\rho_2}\left|\frac{d}{d\rho}m_{z,\rho}\right|d\rho\right|\nonumber\\
&\leq&c\left|\rho_1^{\frac{\varepsilon}{2}}-\rho_2^{\frac{\varepsilon}{2}}\right|.
\end{eqnarray}
Thus the claim follows.
\end{proof}
\begin{clm} If $z\in H_\varepsilon$, then
	\begin{equation}
	\avint_{Q_\rho(z)} |m-\mz|^2 dxdt\leq cr^\varepsilon.
	\end{equation}
	\end{clm}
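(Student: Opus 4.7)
The plan is to split the oscillation into a spatial part and a temporal part via the slice average
\[
\langle m\rangle_t := \avint_{B_\rho(y)} m(x,t)\,dx,
\]
and to exploit $z\in H_\varepsilon$ through the two integrals $\int_{Q_\rho(z)}|\nabla m|^2\,dxdt$ and $\int_{Q_\rho(z)}|\partial_t m|^2\,dxdt$ featured in (\ref{me4}). Note first that because these two quantities tend to $0$ faster than $\rho^{N+\varepsilon}$ as $\rho\to 0^+$, and are dominated by their global $L^1$ norm for $\rho$ bounded below, there is a constant $c=c(z,\varepsilon)$ such that
\[
\int_{Q_\rho(z)}|\nabla m|^2\,dxdt+\int_{Q_\rho(z)}|\partial_t m|^2\,dxdt\;\le\;c\,\rho^{N+\varepsilon}
\]
for every admissible $\rho$. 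Using $|m-m_{z,\rho}|^2\le 2|m-\langle m\rangle_t|^2+2|\langle m\rangle_t-m_{z,\rho}|^2$, it suffices to bound each piece by $c\,\rho^\varepsilon$.

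For the spatial piece, I apply Poincar\'e's inequality on $B_\rho(y)$ at each fixed time, integrate in $t$, and pull out the $\rho^2$ factor:
\[
\avint_{Q_\rho(z)}|m-\langle m\rangle_t|^2\,dxdt\;\le\;c\rho^2\avint_{Q_\rho(z)}|\nabla m|^2\,dxdt\;\le\;c\rho^\varepsilon.
\]
For the temporal piece, observe that $m_{z,\rho}$ is precisely the temporal average of $\langle m\rangle_t$ over the slab $(\tau-\tfrac12\rho^2,\tau+\tfrac12\rho^2)$, so it suffices to control the oscillation of $t\mapsto\langle m\rangle_t$. For any two such times $t_1,t_2$, by the fundamental theorem of calculus and Cauchy--Schwarz,
\[
|\langle m\rangle_{t_1}-\langle m\rangle_{t_2}|^2
\;\le\;|t_1-t_2|\int_{t_1}^{t_2}\Bigl(\avint_{B_\rho(y)}\partial_t m\,dx\Bigr)^2 dt
\;\le\;\frac{|t_1-t_2|}{|B_\rho(y)|}\int_{Q_\rho(z)}|\partial_t m|^2\,dxdt.
\]
Taking $|t_1-t_2|\le\rho^2$ and using the bound on $\int_{Q_\rho(z)}|\partial_t m|^2$ gives $|\langle m\rangle_{t_1}-\langle m\rangle_{t_2}|^2\le c\rho^{2+\varepsilon}$, hence
\[
\avint_{Q_\rho(z)}|\langle m\rangle_t-m_{z,\rho}|^2\,dxdt\;\le\;c\rho^{2+\varepsilon}\;\le\;c\rho^\varepsilon.
\]
Summing the two contributions yields the desired estimate.

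No essential obstacle arises here; the only thing requiring a little care is converting the ``$\lim_{\rho\to 0}$'' smallness encoded in the definition of $H_\varepsilon$ into a uniform upper bound of the form $c(z,\varepsilon)\rho^{N+\varepsilon}$ valid on the whole range of admissible $\rho$, which is straightforward since the relevant integrands are globally in $L^1(\Omega_T)$. All the decay then comes from the two explicit $\rho^2$ factors produced by Poincar\'e in space and by the fundamental theorem of calculus in time.
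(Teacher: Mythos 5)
Your proof is correct and follows essentially the same strategy as the paper: split the oscillation against the time-slice spatial average $m_{y,\rho}(t)$ (your $\langle m\rangle_t$), control the spatial piece by Poincar\'e and the temporal piece by the fundamental theorem of calculus plus Cauchy--Schwarz, and then feed in the decay from the definition of $H_\varepsilon$. The only cosmetic difference is that the paper writes the temporal estimate through $\frac{d}{d\rho}m_{y,r}(t)$ directly rather than through a pairwise difference $|\langle m\rangle_{t_1}-\langle m\rangle_{t_2}|$, and it does not spell out your (correct) remark that the ``$\lim=0$'' in the definition of $H_\varepsilon$ yields a uniform bound $c(z,\varepsilon)\rho^{N+\varepsilon}$ over all admissible radii; both are minor presentational points.
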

\begin{proof}
	Note that 
	$$\mz=\frac{1}{r^2}\int_{\tau-\frac{1}{2}r^2}^{\tau+\frac{1}{2}r^2}\avint_{B_r(y)} m(x,t)dxdt=\frac{1}{r^2}\int_{\tau-\frac{1}{2}r^2}^{\tau+\frac{1}{2}r^2}m_{y,r}(t)dt.$$
	That is, $\mz$ is the average of $\mrt$ over $[\tau-\frac{1}{2}r^2, \tau+\frac{1}{2}r^2]$.
	Subsequently, we have
	\begin{eqnarray}
	|m_{y,r}(t)-\mz|&\leq&\int_{\tau-\frac{1}{2}r^2}^{\tau+\frac{1}{2}r^2}\left|\frac{d}{d\omega}m_{y,r}(\omega)\right|d\omega\nonumber\\
	&\leq&r\left(\int_{\tau-\frac{1}{2}r^2}^{\tau+\frac{1}{2}r^2}\avint_{B_r(y)}|\partial_\omega m|^2 dxd\omega \right)^{\frac{1}{2}},\label{rm12}
	\end{eqnarray}
	from whence follows
	\begin{equation}
	\frac{1}{r^2}\int_{\tau-\frac{1}{2}r^2}^{\tau+\frac{1}{2}r^2}|m_{y,r}(t)-\mz|^2 dt\leq \frac{c}{r^{N-2}}\int_{Q_{r}(z)} |\partial_tm|^2dxdt.
	\end{equation}
	In view of Poincar\'{e}'s inequality(\cite{EG}, p.141), we have
	\begin{equation}
	\avint_{B_r(y)}|m-m_{y,r}(t)|^2 dx\leq  cr^2\avint_{B_r(y)}|\nabla m|^2 dx.
	\end{equation}
	We compute
	\begin{eqnarray}
	\avint_{Q_\rho(z)} |m-\mz|^2 dxdt&=&\frac{1}{r^2}\int_{\tau-\frac{1}{2}r^2}^{\tau+\frac{1}{2}r^2}\avint_{B_r(y)}|m-\mz|^2 dxdt\nonumber\\
	&\leq&\frac{2}{r^2}\int_{\tau-\frac{1}{2}r^2}^{\tau+\frac{1}{2}r^2}\avint_{B_r(y)}|m-m_{y,r}(t)|^2 dxdt\nonumber\\
	&&+\frac{2}{r^2}\int_{\tau-\frac{1}{2}r^2}^{\tau+\frac{1}{2}r^2}|m_{y,r}(t)-\mz|^2 dt\nonumber\\
	&\leq&\frac{c}{r^N}\int_{Q_{r}(z)}|\nabla m|^2dxdt+\frac{c}{r^{N-2}}\int_{Q_{r}(z)}|\partial_tm|^2dxdt\nonumber\\
	&\leq&cr^\varepsilon.\label{rm13}
	\end{eqnarray}
	This completes the proof.
	\end{proof}
\begin{clm}Let $z\in H_\varepsilon$.
	Then for each $\alpha\in (0, \min\{\frac{2}{N-2}, \frac{4}{N}\}]$ there is a positive number $c$ such that
	\begin{equation}
\mat\avint_{B_r(y)}|m-\mrt|^{2+\alpha}dx\leq cr^{\frac{(2\alpha+2)\varepsilon}{d}}.
	\end{equation}
\end{clm}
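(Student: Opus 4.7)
The plan is to combine the $L^\infty_t L^2_x$ bound following from (\ref{me3}) with a H\"older interpolation between $L^2$ and $L^d$.

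For $z=(y,\tau)\in H_\varepsilon$, I first establish
\begin{equation*}
\mat\int_{B_r(y)}|m-\mrt|^2\,dx\le cr^{N+\varepsilon},
\end{equation*}
call it $(\star)$. This follows by applying (\ref{me3}) on the cylinder $Q_{2r}(z)$: Claim 3.1 bounds $|m_{z,2r}|$; the definition of $H_\varepsilon$ gives $\int_{Q_{2r}(z)}(|\nabla p|^2+\mnp^2+|\partial_tm|^2)\,dxdt\le cr^{N+\varepsilon}$; and Claim 3.2 (integrated) gives $\int_{Q_{2r}(z)}|m-m_{z,2r}|^2\,dxdt\le cr^{N+2+\varepsilon}$. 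Substituting these into the right-hand side of (\ref{me3}) yields $\mat\int_{B_r(y)}|m-m_{z,2r}|^2\,dx\le cr^{N+\varepsilon}$. The correction $|m_{z,2r}-\mrt|^2\le cr^\varepsilon$, deduced from (\ref{rm12}) and the $H_\varepsilon$-bound on $\int|\partial_tm|^2$, is absorbed after multiplication by $|B_r(y)|$, producing $(\star)$.

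Next, I apply the H\"older interpolation at each fixed $t$:
\begin{equation*}
\int_{B_r(y)}|m-\mrt|^{2+\alpha}\,dx\le\left(\int_{B_r(y)}|m-\mrt|^2\right)^{1-\beta}\left(\int_{B_r(y)}|m-\mrt|^d\right)^{\beta},
\end{equation*}
with $\beta=\alpha(N-2)/4$. This comes from $\|\cdot\|_{L^{2+\alpha}}\le\|\cdot\|_{L^2}^{1-\theta}\|\cdot\|_{L^d}^{\theta}$ raised to the power $2+\alpha$, where $\theta=\alpha N/(2(2+\alpha))$; the hypotheses $\alpha\le 2/(N-2)$ and $\alpha\le 4/N$ ensure $2+\alpha\le d$ and $\beta\in(0,1/2]$. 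The $L^d$ factor is reduced to $c\int_{B_r(y)}|m|^d$ by the triangle inequality and Jensen's inequality applied to $|\mrt|^d\le\avint|m|^d$, and bounded uniformly in $t$ by invoking the global estimate (\ref{f5}), which combined with the Sobolev embedding $H^1(\Omega)\hookrightarrow L^d(\Omega)$ produces $\sup_\tau\|m(\cdot,\tau)\|_{L^d(\Omega)}\le C$.

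The principal technical obstacle will be matching the target exponent $(2+2\alpha)\varepsilon/d$. A straightforward combination of $(\star)$, the interpolation, and the uniform $L^d$ bound gives $\mat\avint_{B_r(y)}|m-\mrt|^{2+\alpha}\,dx\le cr^{\varepsilon(1-\beta)-N\beta}$, which in general falls short of the claimed rate. Closing this gap will likely require a direct energy estimate derived by testing (\ref{e2}) against $|m-m_{z,2r}|^\alpha(m-m_{z,2r})\xi^2$, with $\xi$ a smooth space-time cutoff of $Q_{2r}(z)$: the $\partial_t m$ term yields $\tfrac{1}{2+\alpha}\tfrac{d}{dt}\int\xi^2|m-m_{z,2r}|^{2+\alpha}$, the diffusion term produces the nonnegative $\int\xi^2|m-m_{z,2r}|^\alpha|\nabla m|^2$, and the cubic contribution $E^2(m\cdot\nabla p)(\nabla p\cdot(m-m_{z,2r}))|m-m_{z,2r}|^\alpha\xi^2$ is then estimated by H\"older using the $H_\varepsilon$-bounds on $\int|\nabla p|^2$ and $\int\mnp^2$ together with a Gagliardo--Nirenberg inequality applied to $(\star)$. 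The constraints $\alpha\le 4/N$ and $\alpha\le 2/(N-2)$ are precisely what makes the resulting H\"older exponents close with the rate $r^{(2+2\alpha)\varepsilon/d}$; converting $|m-m_{z,2r}|$ to $|m-\mrt|$ at the end via the triangle inequality and the correction above completes the argument.
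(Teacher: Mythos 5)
Your first step, the bound $\mat\int_{B_r(y)}|m-\mrt|^2dx\le cr^{N+\varepsilon}$, is indeed the paper's own starting point (it follows from (\ref{me3}) together with Claims 3.1--3.2). But from there the proposal does not close. The fixed-time interpolation between $L^2(B_r)$ and $L^d(B_r)$ with the \emph{global} uniform bound $\sup_\tau\|m(\cdot,\tau)\|_{L^d(\Omega)}\le C$ cannot work: the $L^d(B_r)$ factor does not decay in $r$, so, as you yourself compute, you only get $cr^{\varepsilon(1-\beta)-N\beta}$, which is not the claimed rate (and for small $\varepsilon$ the exponent is negative). The fallback you then sketch --- testing (\ref{e2}) against $|m-m_{z,2r}|^\alpha(m-m_{z,2r})\xi^2$ and asserting that ``the H\"older exponents close'' --- is exactly the part that would constitute the proof, and it is not carried out: you do not estimate the cubic term (which, after Cauchy--Schwarz against the $H_\varepsilon$ bounds on $\int|\nabla p|^2$ and $\int\mnp^2$, requires local space-time integrability of $|m-m_{z,2r}|^{2+2\alpha}$ at the correct scale, i.e.\ essentially the quantity being proved), you do not treat the term $|m|^{2(\gamma-1)}m\cdot(m-m_{z,2r})|m-m_{z,2r}|^\alpha$, and the Gronwall-type step needs a good slice value of $\int\xi^2|m-m_{z,2r}|^{2+\alpha}$, which is again unavailable a priori. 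So the proposal has a genuine gap precisely at the point where the exponent $\frac{(2\alpha+2)\varepsilon}{d}$ must be produced.

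The paper's mechanism is different and simpler, and it explains where that exponent comes from. Set $f_r(t)=\avint_{B_r(y)}|m-\mrt|^{2+\alpha}dx$. One bounds the time derivative directly, without testing the PDE with powers of $m$: by Cauchy--Schwarz,
\begin{equation*}
\left|\frac{d}{dt}f_r(t)\right|\le c\left(\avint_{B_r(y)}|m|^{2\alpha+2}dx\right)^{\frac12}\left(\avint_{B_r(y)}|\partial_tm|^2dx\right)^{\frac12},
\end{equation*}
and then $\mat f_r(t)\le\int|f_r'|dt+\frac1{r^2}\int f_r\,dt$. The first term is estimated by H\"older using the \emph{local} $H_\varepsilon$ bounds $\int_{Q_r(z)}|m|^d\,dxdt\le cr^{N+\varepsilon}$ and $\int_{Q_r(z)}|\partial_tm|^2\,dxdt\le cr^{N+\varepsilon}$ (note $2+2\alpha\le d$ by the hypothesis on $\alpha$), and this pairing is exactly what yields $cr^{\frac{(2\alpha+2)\varepsilon}{d}}$; the second term is handled by the space-time estimate $\frac1{r^2}\int\avint_{B_r(y)}|m-\mrt|^{2+\frac4N}dxdt\le cr^{\varepsilon+\frac{2\varepsilon}{N}}$, obtained from the $L^\infty_tL^2_x$ bound, Sobolev--Poincar\'e, and the $H_\varepsilon$ bound on $\int_{Q_r(z)}|\nabla m|^2$. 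In short, the missing idea in your proposal is to use the locally available control of $\partial_tm$ (from the definition of $H_\varepsilon$) to upgrade the time-averaged $(2+\alpha)$-power estimate to a supremum in $t$, rather than relying on a global $L^\infty_tL^d_x$ bound or an unverified energy identity.
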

\begin{proof} 
	 It follows from (\ref{rm13}) and (\ref{me3})  that
	\begin{equation}
	\mat\avint_{B_r(y)}|m-\mrt|^{2}dx\leq cr^\varepsilon.
	\end{equation}
	Note that the corollary  in (\cite{ST}, p.144) is not applicable here.  We offer a direct proof.
To this end,	we estimate from Poincar\'{e}'s inequality that
	\begin{eqnarray}
\lefteqn{	\frac{1}{r^2}\int_{\tau-\frac{1}{2}r^2}^{\tau+\frac{1}{2}r^2}\avint_{B_r(y)}|m-\mrt|^{\frac{4}{N}+2}dxdt}\nonumber\\
&\leq&\frac{1}{r^2}\int_{\tau-\frac{1}{2}r^2}^{\tau-\frac{1}{2}r^2}\left(\avint_{B_r(y)}|m-\mrt|^2dx\right)^{\frac{2}{N}}\left(\avint_{B_r(y)}|m-\mrt|^{\frac{2N}{N-2}}dx\right)^{\frac{N-2}{N}}dt\nonumber\\
&\leq&c\left(\mat\avint_{B_r(y)}|m-\mrt|^2dx\right)^{\frac{2}{N}}\nonumber\\
&&\cdot\int_{\tau-\frac{1}{2}r^2}^{\tau-\frac{1}{2}r^2}\avint_{B_r(y)}|\nabla m|^2dxdt\leq cr^{\varepsilon+\frac{2\varepsilon}{N}}.\label{rm21}
	\end{eqnarray}
	Let $\alpha\in (0,\min\{\frac{2}{N-2}, \frac{4}{N}\} ]$ be given. For $t\in[\tau-\frac{1}{2}r^2, \tau+\frac{1}{2}r^2]$ set
	$$f_r(t)=\avint_{B_r(y)}|m-\mrt|^{2+\alpha}dx.$$
Observe that
\begin{eqnarray}
|m-\mrt|^{2+2\alpha}&\leq&2^{2\alpha+1}\left(|m|^{2+2\alpha}+\avint_{B_r(y)}|m|^{2+2\alpha}dx\right),\\
\left|\frac{d}{dt}\mrt\right|^2&\leq&\left(\avint_{B_r(y)}|\partial_tm|dx\right)^2\leq \avint_{B_r(y)}|\partial_tm|^2dx.
\end{eqnarray}	
Keeping these two inequalities in mind, we calculate that
\begin{eqnarray}
\left|\frac{d}{dt}f_r(t)\right|&=&(2+\alpha)\left|\avint_{B_r(y)}|m-\mrt|^\alpha(m-\mrt)\cdot\frac{d}{dt}(m-\mrt)dt\right|\nonumber\\
&\leq&c\avint_{B_r(y)}|m-\mrt|^{\alpha+1}|\partial_tm-\frac{d}{dt}\mrt|dx\nonumber\\
&\leq&c\left(\avint_{B_r(y)}|m-\mrt|^{2\alpha+2}dx\right)^{\frac{1}{2}}\left(\avint_{B_r(y)}|\partial_tm-\frac{d}{dt}\mrt|^2dx\right)^{\frac{1}{2}}\nonumber\\
&\leq&c\left(\avint_{B_r(y)}|m|^{2\alpha+2}dx\right)^{\frac{1}{2}}\left(\avint_{B_r(y)}|\partial_tm|^2dx\right)^{\frac{1}{2}}.
\end{eqnarray}
Note that $2+2\alpha\leq d$, where $d$ is given in \eqref{me4}. We estimate
\begin{eqnarray}
\mat f_r(t)&\leq &\mat\left|f_r(t)-\frac{1}{r^2}\int_{\tau-\frac{1}{2}r^2}^{\tau+\frac{1}{2}r^2}f_r(\omega)d\omega\right|\nonumber\\
&&+\frac{1}{r^2}\int_{\tau-\frac{1}{2}r^2}^{\tau+\frac{1}{2}r^2}f_r(\omega)d\omega\nonumber\\
&\leq&\int_{\tau-\frac{1}{2}r^2}^{\tau+\frac{1}{2}r^2}\left|\frac{d}{dt}f_r(t)\right|dt
+\frac{1}{r^2}\int_{\tau-\frac{1}{2}r^2}^{\tau+\frac{1}{2}r^2}f_r(t)dt\nonumber\\
&\leq&c\left(\frac{1}{r^N}\int_{Q_{r}(z)}|m|^{2\alpha+2}dx\right)^{\frac{1}{2}}\left(\frac{1}{r^N}\int_{Q_{r}(z)}|\partial_tm|^2dx\right)^{\frac{1}{2}}\nonumber\\
&&+\frac{1}{r^2}\int_{\tau-\frac{1}{2}r^2}^{\tau+\frac{1}{2}r^2}\avint_{B_r(y)}|m-\mrt|^{\alpha+2}dxdt\nonumber\\
&\leq&c\left(\frac{1}{r^N}\int_{Q_{r}(z)}|m|^{d}dx\right)^{\frac{2\alpha+2}{2d}}\left(\frac{1}{r^N}\int_{Q_{r}(z)}|\partial_tm|^2dx\right)^{\frac{1}{2}}\nonumber\\
&&+\left(\frac{1}{r^2}\int_{\tau-\frac{1}{2}r^2}^{\tau+\frac{1}{2}r^2}\avint_{B_r(y)}|m-\mrt|^{\frac{4}{N}+2}dxdt\right)^{\frac{2+\alpha}{2+\frac{4}{N}}}\nonumber\\
&\leq&cr^{\frac{(2\alpha+2)\varepsilon}{d}}+
cr^{\varepsilon(1+\frac{\alpha}{2})}\leq cr^{\frac{(2\alpha+2)\varepsilon}{d}}.
\end{eqnarray}
The proof is complete.
	\end{proof}
	\begin{clm}
		If $z\in H_\varepsilon$, then there is $\varepsilon_1>0$ such that
		\begin{equation}
	A_r(z)\leq cr^{\varepsilon_1}.
		\end{equation}
	\end{clm}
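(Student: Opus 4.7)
The plan is to establish a power-type decay $A_r(z)\le cr^{\varepsilon_1}$ for some $\varepsilon_1>0$ via a Campanato-type comparison argument, exploiting the pointwise-in-$t$ estimates already obtained on $H_\varepsilon$ in Claims 3.1--3.3. Fix $z=(y,\tau)\in H_\varepsilon$ and small $r>0$. For each $t\in[\tau-\tfrac12r^2,\tau+\tfrac12r^2]$ set $\bar m = m_{z,r}$ (bounded by Claim 3.1) and decompose $p(\cdot,t)=h(\cdot,t)+w(\cdot,t)$ on $B_r(y)$, where $h$ solves the frozen-coefficient homogeneous problem
\begin{equation*}
-\mathrm{div}\bigl[(I+\bar m\otimes\bar m)\nabla h\bigr]=0\ \text{in}\ B_r(y),\qquad h=p\ \text{on}\ \partial B_r(y),
\end{equation*}
and $w=p-h\in W^{1,2}_0(B_r(y))$ satisfies
\begin{equation*}
-\mathrm{div}\bigl[(I+\bar m\otimes\bar m)\nabla w\bigr]=S+\mathrm{div}\bigl[(m\otimes m-\bar m\otimes\bar m)\nabla p\bigr].
\end{equation*}

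Since the $h$-equation has constant coefficients and is homogeneous, classical Campanato decay applies at each $t$:
\begin{equation*}
\int_{B_\rho(y)}|h-h_{y,\rho}(t)|^2\,dx\le c(\rho/r)^{N+2}\int_{B_r(y)}|h-h_{y,r}(t)|^2\,dx,\qquad \rho\in(0,r].
\end{equation*}
For $w$, testing the equation by $w$ itself and invoking Sobolev-Poincar\'e (available since $w=0$ on $\partial B_r(y)$) together with the $L^q$-bound on $S$ reduces matters to the pointwise-in-$t$ bound
\begin{equation*}
\int_{B_r(y)}|\nabla w|^2\,dx\le c\int_{B_r(y)}|m-\bar m|^2(m\cdot\nabla p)^2\,dx+c|\bar m|^2\!\int_{B_r(y)}\!\bigl((m-\bar m)\cdot\nabla p\bigr)^2\,dx+cr^{N+2-2N/q}.
\end{equation*}
The two nonlinear integrals are controlled by a H\"older interpolation that uses: the $L^d$-smallness of $|m|$ from \eqref{me4}; the max-in-$t$ $L^{2+\alpha}$-decay of $m-m_{y,r}(t)$ from Claim 3.3; the pointwise-in-$t$ fluctuation bound $|m_{y,r}(t)-m_{z,r}|^2\le cr^{2-N}\int_{Q_r(z)}|\partial_t m|^2\,dxdt\le cr^{2+\varepsilon}$ (derived as in (\ref{rm12})); and the $L^2$-smallness of $\nabla p$ and $m\cdot\nabla p$ on $H_\varepsilon$. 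The outcome is $\int_{B_r(y)}|\nabla w|^2\,dx\le cr^{N+\sigma}$ uniformly in $t$, for some $\sigma>0$.

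Combining via $|p-p_{y,\rho}(t)|^2\le 2|h-h_{y,\rho}(t)|^2+2|w-w_{y,\rho}(t)|^2$, estimating $\int_{B_\rho}(w-w_{y,\rho}(t))^2\le\int_{B_r}w^2\le cr^2\int_{B_r}|\nabla w|^2$ by Poincar\'e, and bounding $\int|h-h_{y,r}(t)|^2\le c\int|p-p_{y,r}(t)|^2+c\int w^2$ one obtains
\begin{equation*}
\int_{B_\rho(y)}|p-p_{y,\rho}(t)|^2\,dx\le c(\rho/r)^{N+2}\int_{B_r(y)}|p-p_{y,r}(t)|^2\,dx+cr^{N+2+\sigma}.
\end{equation*}
Dividing by $\rho^N$, taking $\max_t$, and setting $\rho=\theta r$ for $\theta$ sufficiently small yields $A_{\theta r}(z)\le c\theta^2 A_r(z)+c\theta^{-N}r^{2+\sigma}$, whereupon a standard iteration argument (cf.\ \cite{G}, Chap.\ III) delivers $A_r(z)\le cr^{\varepsilon_1}$ for some $\varepsilon_1>0$.

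The main obstacle will be the uniform-in-$t$ control of the two nonlinear integrals in the $w$-estimate; time-integrated $H_\varepsilon$-bounds alone do not produce such control, so one must leverage the max-in-$t$ estimate of Claim 3.3 together with the $L^d$-integrability of $m$. The H\"older interpolation closes only when $N\le 3$, which is precisely where the dimensional restriction in Theorem 1.1 enters.
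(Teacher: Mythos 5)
Your plan follows the same structural idea as the paper---freeze the coefficient in the elliptic equation for $p$ on each time slice, use Campanato decay for the homogeneous part, and show that the remainder contributes a fixed power of $r$---but it breaks down at the energy estimate for $w$. You claim an $L^2$ estimate
$$\int_{B_r(y)}|\nabla w|^2\,dx\le c\int_{B_r(y)}|m-\bar m|^2(m\cdot\nabla p)^2\,dx+c|\bar m|^2\int_{B_r(y)}\bigl((m-\bar m)\cdot\nabla p\bigr)^2\,dx+cr^{N+2-2N/q},$$
and then that H\"older interpolation with the $L^{2+\alpha}$ control on $m-\bar m$ from Claim 3.3 and the uniform-in-$t$ $L^2$ control on $m\cdot\nabla p$ closes the right-hand side. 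It does not: the product $(m\cdot\nabla p)^2\,|m-\bar m|^2$ pairs $(m\cdot\nabla p)^2\in L^1(B_r)$ against $|m-\bar m|^2$, which would require $m-\bar m\in L^\infty$, or conversely requires $m\cdot\nabla p\in L^{2(2+\alpha)/\alpha}$, i.e.\ a higher-integrability estimate for $m\cdot\nabla p$ that is not available (the equation for $p$ is not uniformly elliptic, so Meyers/Gehring cannot be invoked). Throwing in the $L^d$ bound on $m$ does not rescue the exponent count either. In short, there is no $L^2$ estimate for $\nabla w$ that decays in $r$.

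This is exactly the difficulty the paper is designed around. It derives only an $L^s$ estimate for $\nabla\phi$ with $s\in(1,2)$ strictly, via the $W^{1,s}$ elliptic theory of Simader, using $\|(m\cdot\nabla p)w_r\|_s$ with $\frac{2s}{2-s}=2+\alpha$; the endpoint $s=1$ fails because the $W^{1,1}$ estimate is false, and the endpoint $s=2$ fails because $\frac{2s}{2-s}=\infty$---this is precisely the remark ``the above inequality is not true for $s=1$.\ This is why Claim 3.3 is crucial.'' Poincar\'e then gives an $L^{Ns/(N-s)}$ bound on $\phi-\phi_{y,r}(t)$, which is below $L^2$, so the paper interpolates against the bound $\|\phi\|_\infty\le 2\|p\|_\infty$ (available from (C1)) to reach the $L^2$ decay needed for the Campanato iteration. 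That $L^\infty$ control is the key piece of information your proposal never uses, and without it the argument cannot be closed. Your choice of frozen vector ($m_{z,r}$ rather than $m_{y,r}(t)$) and the way you pass between them is fine and would work equally well in the paper's scheme; the gap is solely in the $s=2$ versus $s\in(1,2)$ step.
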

	Obviously, this claim implies (\ref{rw11}).
	\begin{proof}
Let $z=(y,\tau)\in H_\varepsilon$ be given. Fix $r>0$ with
$\qzr\subset\ot$. 
Set
\begin{equation}
w_r=m-\mrt.
\end{equation}
Note that
\begin{eqnarray*}
m\otimes m&=&(m-\mrt)\otimes m+\mrt\otimes(m-\mrt)\\
&&+\mrt\otimes\mrt\nonumber\\
&=&w_r\otimes m+\mrt\otimes w_r+\mrt\otimes\mrt.
\end{eqnarray*}
Thus $p$ satisfies the system
\begin{eqnarray}
\lefteqn{-\mbox{div}[(I+\mrt\otimes\mrt)\nabla p]}\nonumber\\
&&=\mbox{div}\left[(m\cdot\nabla p)w_r\right]+\mbox{div}\left[(w_r\cdot\nabla p)\mrt\right]+S(x)\ \ \ \mbox{in $\qzr$.}\label{bl00}
\end{eqnarray}
Here we have used the fact that $(w_r\otimes m)\nabla p=\mnp w_r$.
We decompose $p$ into $\eta+\phi $ on $\qzr$ as follows: $\eta$ is the solution of the problem
\begin{eqnarray}
-\mbox{div}\left[(I+\mrt\otimes\mrt)\nabla\eta\right]&=& 0\nonumber\\
&& \ \ \mbox{in $\byr$,}\ \ t\in [\tau-\frac{1}{2}r^2, \tau+\frac{1}{2}r^2],\label{eta11}\\
\eta&=&p\ \ \ \mbox{on $\partial\byr$},\ \ t\in [\tau-\frac{1}{2}r^2, \tau+\frac{1}{2}r^2],\label{eta22}
\end{eqnarray}
while $\phi$ is the solution of the problem
\begin{eqnarray}
-\mbox{div}\left[(I+\mrt\otimes\mrt)\nabla\phi\right]&=& \mbox{div}\left[(m\cdot\nabla p)w_r\right]+\mbox{div}\left[(w_r\cdot\nabla p)\mrt\right]\nonumber\\
&&+S(x) \ \ \mbox{in $\byr$,}\ \ t\in [\tau-\frac{1}{2}r^2, \tau+\frac{1}{2}r^2],\label{p11}\\
\phi&=&0\ \ \ \mbox{on $\partial\byr$,}\ \ t\in [\tau-\frac{1}{2}r^2], \tau+\frac{1}{2}r^2\label{p22}
\end{eqnarray}
Recall from (\ref{rm12}) that
\begin{eqnarray}
|\mrt|&\leq&|\mrt-\mz|+|\mz|\nonumber\\
&\leq&cr\left(\frac{1}{r^N}\int_{Q_{r}(z)}|\partial_t m|^2dxdt\right)^{\frac{1}{2}}+|\mz|.
\end{eqnarray}
By Theorem 2.1 in (\cite{G}, p.78), there is a positive number $c$ depending only on $\sup_{r>0}|\mrt|$ such that
\begin{equation}
\air|\eta-\eta_{y,\rho}|^2dx\leq c\left(\frac{\rho}{R}\right)^2\aiR|\eta-\eta_{y,R}|^2dx\label{rw12}
\end{equation}
for all $0<\rho\leq R\leq r$ and $t\in [\tau-\frac{1}{2}r^2, \tau+\frac{1}{2}r^2]$. On the other hand, another classical regularity result \cite{SI} for linear elliptic equations with continuous coefficients asserts that for each $s\in (1,\infty)$ there is a positive number
$c$ with the property
\begin{eqnarray}
\|\nabla\phi\|_{s}&\leq& c\|(m\cdot\nabla p) w_r\|_{s}+c\|(w_r\cdot\nabla p)\mrt\|_{s}\nonumber\\
&&+c\|S(x)\|_{\frac{sN}{s+N}},\ \ t\in [\tau-\frac{1}{2}r^2, \tau+\frac{1}{2}r^2].
\end{eqnarray}
Note that the constant $c$ here is also independent of $r$.
We remark that in general the above inequality is not true for $s=1$. This is why Claim 3.3 is crucial to our development.
Obviously, if we replace $\mz$ by
 $\mrt$ in (\ref{pe4}), the resulting inequality still holds. This implies
\begin{equation}
\mat\frac{1}{r^{N-2}}\int_{B_r(y)}\left(|\nabla p|^2+\mnp^2\right)dx\leq c.
\end{equation}
We can easily find a $s\in(1,2)$ so that
\begin{equation}
\frac{2s}{2-s}=2+\frac{4(s-1)}{2-s}\leq2+\min\{\frac{2}{N-2},\frac{4}{N}\}.
\end{equation}
We estimate
\begin{eqnarray}
\lefteqn{\mat\frac{1}{r^{N-s}}\int_{B_r(y)} |\mnp w_r|^sdx}\nonumber\\
&\leq& \left(\mat\frac{1}{r^{N-2}}\int_{B_r(y)} \mnp^2dx\right)^{\frac{s}{2}}\nonumber\\
&&\cdot \left(\mat\frac{1}{r^{N}}\int_{B_r(y)}|w_r|^\frac{2s}{2-s}dx \right)^{\frac{2-s}{2}}\nonumber\\
&\leq& c\left(\mat\frac{1}{r^{N}}\int_{B_r(y)}|w_r|^\frac{2s}{2-s}dx \right)^{\frac{2-s}{2}}\nonumber\\
&\leq& cr^{\frac{(2-s)(2\alpha+2)\varepsilon}{2d}},
\end{eqnarray}
where $\alpha=\frac{4(s-1)}{2-s}$.
Similarly, we have
\begin{eqnarray}
\lefteqn{\mat\frac{1}{r^{N-s}}\int_{B_r(y)} | (w_r\cdot\nabla p)\mrt|^sdx}\nonumber\\
&\leq& \left(\mat\frac{1}{r^{N-2}}\int_{B_r(y)}|\nabla p|^2dx\right)^{\frac{s}{2}}\nonumber\\
&&\cdot \left(\mat\frac{1}{r^{N}}\int_{B_r(y)}|w_r|^\frac{2s}{2-s}dx \right)^{\frac{2-s}{2}}\nonumber\\
&\leq& c\left(\mat\frac{1}{r^{N}}\int_{B_r(y)}|w_r|^\frac{2s}{2-s}dx \right)^{\frac{2-s}{2}}\nonumber\\
&\leq& cr^{\frac{(2-s)(2\alpha+2)\varepsilon}{2d}},\\
\lefteqn{\frac{1}{r^{N-s}}\left(\int_{B_r(y)}|S(x)|^{\frac{Ns}{N+s}}dx\right)^{\frac{s+N}{N}}}\nonumber\\
&\leq& \frac{1}{r^{N-s}}\left(\int_{B_r(y)}|S(x)|^{q}dx\right)^{\frac{s}{q}}r^{s+N-\frac{Ns}{q}}
\leq cr^{s(2-\frac{N}{q})}.
\end{eqnarray}
To summarize, we have
\begin{equation}
\mat\frac{1}{r^{N-s}}\int_{B_r(y)} |\nabla\phi|^sdx\leq cr^{\min\{{\frac{(2-s)(2\alpha+2)\varepsilon}{2d}},\ s(2-\frac{N}{q})\}}.
\end{equation}
It follows from Poincar\'{e}'s inequality that
\begin{equation}
\left(\avint_{B_r(y)}|\phi-\phi_{y,r}(t)|^{\frac{Ns}{N-s}}dx\right)^{\frac{N-s}{Ns}}\leq cr\left(\avint_{B_r(y)}|\nabla\phi|^sdx\right)^{\frac{1}{s}}=c\left(\frac{1}{r^{N-s}}\int_{B_r(y)} |\nabla\phi|^sdx\right)^{\frac{1}{s}}.
\end{equation}
Remember that $\|\phi\|_\infty\leq \|\eta\|_\infty+\|p\|_\infty\leq 2\|p\|_\infty$.
Hence we can always find a positive number $\varepsilon_1\in(0,2)$ so that
\begin{equation}
\mat\avint_{B_r(y)}|\phi-\phi_{y,r}(t)|^2dx\leq cr^{\varepsilon_1}.\label{rw13}
\end{equation}
For $0<\rho\leq r$ we derive from (\ref{rw12}) and (\ref{rw13}) that
\begin{eqnarray}
\lefteqn{ \int_{B_\rho(y)}|p-p_{y,\rho}(t)|^2dx}\nonumber\\
&\leq &2 \int_{B_\rho(y)}|\eta-\eta_{y,\rho}(t)|^2dx+2 \int_{B_\rho(y)}|\phi-\phi_{y,\rho}(t)|^2dx\nonumber\\
&\leq&c\left(\frac{\rho}{r}\right)^{N+2}\int_{B_r(y)}|\eta-\eta_{y,r}(t)|^2dx+2 \int_{B_r(y)}|\phi-\phi_{y,r}(t)|^2dx\nonumber\\
&\leq&c\left(\frac{\rho}{r}\right)^{N+2}\int_{B_r(y)}|p-p_{y,r}(t)|^2dx+cr^{N+\varepsilon_1}.\label{rw14}
\end{eqnarray}
Here we have used the fact that $\int_{B_\rho(y)}|\phi-\phi_{y,\rho}(t)|^2dx$ is an increasing function of $\rho$. We set
$$
\sigma(r)=\mat\int_{B_r(y)}|p-p_{y,r}(t)|^2dx.
$$
We easily infer from (\ref{rw14}) that 
\begin{equation}
\sigma(\rho)\leq c\left(\frac{\rho}{r}\right)^{N+2}\sigma(r)+cr^{N+\varepsilon_1}.
\end{equation}
for all $0<\rho\leq r$. This puts us in a position to apply Lemma 2.1 in (\cite{G}, p.86), from whence follows
\begin{equation}
\sigma(\rho)\leq c\left(\frac{\rho}{r}\right)^{N+\varepsilon_1}\sigma(r)+c\rho^{N+\varepsilon_1}
\end{equation}
for all $0<\rho\leq r$.
This gives the claim.
\end{proof}

		\section{Proof of Proposition \ref{pro12}}	
		In this section we present the proof of Proposition \ref{pro12}. We would like to remark that the proof of this proposition is more challenging than that of Proposition \ref{prop1.3} mainly because we do not have a local estimate for
		$\partial_t m$ or a local $L^\infty$ estimate for $p$. This also causes us to impose the restriction $N\leq 3$. Note that this restriction is not needed in Propositions \ref{prop1.1} and \ref{prop1.3}.
			
			\begin{proof}[Proof of Proposition \ref{pro12}.]
			We argue by contradiction. Suppose that the proposition is false. Then for some $M>0$ (\ref{con1}) and (\ref{con2}) fail to hold no matter how we pick numbers $\varepsilon, \delta$ from the interval $(0,1)$. In particular, we can choose a sequence $\{\varepsilon_k\} \subset (0,1)$ with the property
			\begin{equation}
			\varepsilon_k\rightarrow 0 \ \ \ \mbox{as $k\rightarrow 0$.}\label{con3}
			\end{equation}
			The selection of $\delta$ from  $(0,1)$ is more delicate, and it will be made clear later.
	Let $\delta$ be chosen as below. For each $k$ there exist cylinders $\qzrk\subset\ot$ such that
			\begin{equation}
			|\mzk|\leq M\ \ \mbox{and}\ \ \ezk\leq\varepsilon_k,
			\end{equation}		
	whereas				
					\begin{equation}
					E_{\delta r_k}(z_k)> \frac{1}{2}\ezk, \ \ \ k=1,\cdots.
					\end{equation}	
				Set
					$$
					\lambda^2_k=\ezk.
					$$
					Then (\ref{con3}) asserts
					$$\lambda_k\rightarrow 0\ \ \ \mbox{as $k\rightarrow\infty$.}$$
					We rescale our variables to the unit cylinder $Q_1(0)$, as follows. If $z=(y,\tau)\in Q_1(0)$, write
					\begin{eqnarray}
					\psi_k(y,\tau)&=&\frac{p(\ykr,\tkr)-\pkt}{\lambda_k},\\
					n_k(y,\tau)&=&m(\ykr,\tkr),\\
					w_k(y,\tau)&=&\frac{n_k(y,\tau)-\mzk}{\lambda_k}.
					\end{eqnarray}
					We can easily verify
\begin{eqnarray*}
\maxn\int_{B_1(0)}\psi_k^2(y,\tau)dy
&=&\frac{1}{\lambda_k^2}A_{r_k}(z_k)\leq 1,\\
\int_{Q_1(0)}|w_k(y,\tau)|^2dyd\tau&=&\frac{1}{\lambda_k^2r_k^{N+2}}\iqk|m(x,t)-\mzk|^2dxdt\leq 1,
\end{eqnarray*}
but
\begin{eqnarray}
\frac{1}{\delta^{N+2}}\int_{Q_{\delta}(0)}|w_k-(w_k)_{0,\delta}|^2dyd\tau
	&+&\frac{1}{\delta^{N}}\maxd\int_{B_{\delta}(0)}|\psi_k-(\psi_k)_{0,\delta}(\tau)|^2dy\nonumber\\
	&&+
		\frac{\delta^{2\beta}r_k^{2\beta}}{\lambda_k^2}>\frac{1}{2}.\label{con}
\end{eqnarray}
Here and in what follows we suppress the dependence of $\sk,w_k, n_k$ on $(y,\tau)$ for simplicity of notation.
Our plan is to show that the lim sup of the left-hand side of the above inequality as $k\rightarrow\infty$ can be made smaller than $\frac{1}{2}$ if we adjust $\delta$ to be small enough, and thus the desired contradiction follows. 

			We easily see from the definition of $\lambda_k$ that 
			\begin{equation}
			\frac{\delta^{2\beta}r_k^{2\beta}}{\lambda_k^2}\leq\delta^{2\beta}.
			\end{equation}
To analyze the first two terms in (\ref{con}), we first conclude from the proof in \cite{E} that $\psi_k(y,\tau), w_k(y,\tau)$ satisfy the system
		\begin{eqnarray}
		&&-\Delta\psi_k-\mbox{div}\left[(n_k\cdot\nabla\psi_k)n_k\right]=\frac{r_k^2}{\lambda_k}S(\ykr)\equiv F_k(y)\ \ \ \mbox{in $Q_1(0)$,}\label{bl0}\\
		&&\partial_tw_k-D^2\Delta w_k-E^2\lambda_k(n_k\cdot\nabla\psi_k)\nabla\psi_k+\frac{r_k^2}{\lambda_k}|n_k|^{2(\gamma-1)}n_k=0\ \ \ \mbox{in $Q_1(0)$.}\label{bl1}
		\end{eqnarray}
		We can infer from (\ref{pe4}) that
		\begin{equation}
		\int_{Q_{\frac{1}{2}}(0)}|\nabla\psi_k|^2dyd\tau+	\int_{Q_{\frac{1}{2}}(0)}|n_k\cdot\nabla\psi_k|^2dyd\tau\leq c.\label{bl2}
		\end{equation}
	Similarly, we can derive from (\ref{me3}) that
		\begin{equation}
		\max_{\tau\in[-\frac{1}{8},\frac{1}{8}]}\int_{B_{\frac{1}{2}}(0)}|w_k|^2dy
			+\int_{Q_{\frac{1}{2}}(0)}|\nabla w_k|^2 dyd\tau+\frac{r_k^2}{\lambda_k^2}\int_{Q_{\frac{1}{2}}(0)}|n_k|^{2\gamma}dyd\tau
			\leq c+c\frac{r_k^2}{\lambda_k^2}\leq c.\label{bl3}
		\end{equation}
	Consequently, we have
	\begin{eqnarray}
		\int_{Q_{\frac{1}{2}}(0)}\left|\frac{r_k^2}{\lambda_k}|n_k|^{2\gamma-1}\right|^{\frac{2\gamma}{2\gamma-1}}dyd\tau&=&\lambda_k^{\frac{2\gamma}{2\gamma-1}}\left(\frac{r_k^2}{\lambda_k^2}\right)^{\frac{1}{2\gamma-1}}\frac{r_k^2}{\lambda_k^2}\int_{Q_{\frac{1}{2}}(0)}|n_k|^{2\gamma}dyd\tau\nonumber\\
		&&\rightarrow 0\ \ \mbox{as $k\rightarrow 0$.}\label{bl4}
	\end{eqnarray}	
	This together with (\ref{bl1}), (\ref{bl2}), and (\ref{bl3}) implies that the sequence $\{\partial_\tau w_k\}$ is bounded in $L^2(-\frac{1}{8}, \frac{1}{8}; W^{-1,2}(B_{\frac{1}{2}}(0)))+L^{1}(Q_{\frac{1}{2}}(0))$.
	 By a well-known result in \cite{S}, $w_k$ is precompact in $L^2(Q_{\frac{1}{2}}(0))$. Passing to subsequences if necessary, we have
	\begin{eqnarray}
	\mzk&\rightarrow& a,\\
	n_k=\lambda_k w_k+\mzk&\rightarrow& a\ \ \ \mbox{strongly in $L^2(Q_{1}(0))$,}\label{nsc}\\
w_k&\rightarrow& w \ \ \mbox{strongly in $L^2(Q_{\frac{1}{2}}(0))$}\label{ws}\nonumber\\
&&\mbox{ and weakly in $L^2(-\frac{1}{8},\frac{1}{8} ; W^{1,2}(B_{\frac{1}{2}}(0)))$,}\\
\psi_k&\rightarrow&\psi \ \ \ \mbox{ and weakly in $L^2(-\frac{1}{8},\frac{1}{8} ; W^{1,2}(B_{\frac{1}{2}}(0)))$.}\label{pwc}
	\end{eqnarray}
	In view of (\ref{bl2}) and (\ref{bl4}), we can send $k$ to infinity in (\ref{bl1}) to obtain
	\begin{equation}
	\partial_\tau w-D^2\Delta w=0 \ \ \mbox{in $Q_{\frac{1}{2}}(0)$}
	\end{equation}
in the weak, and therefore classical sense. It follows from (\ref{nsc}) and (\ref{pwc}) that 
\begin{equation}
n_k\nabla\psi_k\rightharpoonup a\nabla\psi \ \ \ \mbox{weakly in $L^1(Q_{\frac{1}{2}}(0))$,}
\end{equation}
and therefore weakly in $L^2(Q_{\frac{1}{2}}(0))$ due to (\ref{bl2}). This, in turns, implies
\begin{equation}
(n_k\nabla\psi_k)n_k\rightharpoonup a\nabla\psi a\ \ \ \mbox{weakly in $L^1(Q_{\frac{1}{2}}(0))$.}
\end{equation}
We estimate the last term in (\ref{bl0}) as follows
\begin{eqnarray}
\ibz|F_k|^qdy&=&\frac{r_k^{2q}}{\lambda_k^q}\ibz|S(\ykr)|^qdy\nonumber\\
&=&\frac{r_k^{2q-N}}{\lambda_k^q}\ibk|S(x)|^qdx\nonumber\\
&\leq& c\frac{r_k^{\beta q}}{\lambda_k^q}r_k^{q(2-\frac{N}{q}-\beta)}\leq cr_k^{q(2-\frac{N}{q}-\beta)}\rightarrow 0.\label{jf4}
\end{eqnarray}
The last step is due to (\ref{cb}). We are ready to let $k$ go to infinity in (\ref{bl0}), thereby obtaining
\begin{equation}
-\mbox{div}\left[(I+a\otimes a)\nabla\psi\right]= 0 \ \ \
\mbox{in $Q_{\frac{1}{2}}(0)$.}
\end{equation}
Remember that $a$ is a constant vector. By the classical regularity theory for
linear elliptic equations, there exist $c>0, \alpha\in (0,1)$ determined only by $M$ and $N$ with the property
\begin{equation}
\max_{\tau\in[-\frac{1}{2}]\delta^2,\frac{1}{2}\delta^2]}\avint_{B_\delta(0)}|\psi-\psi_{0,\delta}(\tau)|^2dy\leq\max_{\tau\in[-\frac{1}{2}]\delta^2,\frac{1}{2}\delta^2]} c\delta^{2\alpha}\avint_{B_{\frac{1}{2}}(0)}|\psi-\psi_{0,\frac{1}{2}}(\tau)|^2dy\leq c\delta^{2\alpha}
\end{equation} 
for all $\delta\leq \frac{1}{4}$.
Subsequently,
\begin{equation}
\frac{1}{\delta^{N}}\max_{\tau\in[-\frac{1}{2}]\delta^2,\frac{1}{2}\delta^2]}\int_{B_\delta(0)}|\psi-\psi_{0,\delta}(\tau)|^2dy\leq c\delta^{2\alpha}. 
\end{equation} 
for all $0<\delta\leq\frac{1}{4}$.
 It is also well-known (see, e.g., Claim 1 in \cite{X2}) that there exist $c>0, \alpha\in (0,1)$ determined only by $N, D$ such that
\begin{eqnarray}
\avint_{Q_\delta(0)}|w-w_{0,\delta}|^2dyd\tau\leq c\delta^{2\alpha}\avint_{Q_{\frac{1}{2}}(0)}|w-w_{0,\frac{1}{2}}|^2dyd\tau
\leq c\delta^{2\alpha}
\end{eqnarray}
for all $0<\delta\leq\frac{1}{4}$.

If we could pass to the limit in (\ref{con}), this would result in the desired contradiction. What prevents us
from doing so is the lack of compactness of the sequence $\{\psi_k\}$ in the t-variable. To circumvent this problem, we fix a suitably small number $\frac{1}{16}\geq\delta_0>0$ and consider the decomposition $\psi_k=\eta_k+\phi_k$ on $Q_{\delta_0}(0)$, where $\eta_k$ is the solution of the problem
\begin{eqnarray}
-\mbox{div}\left[(I+\mzk\otimes\mzk)\nabla\eta_k\right]&=& 0\ \ \ \mbox{in $\bdz$,}\ \ \tau\in [-\frac{1}{2}\delta^2_0, \frac{1}{2}\delta^2_0],\label{eta1}\\
\eta_k&=&\psi_k\ \ \ \mbox{on $\partial\bdz$},\ \ \tau\in [-\frac{1}{2}\delta^2_0, \frac{1}{2}\delta^2_0],\label{eta2}
\end{eqnarray}
while $\phi_k$ is the solution of the problem
\begin{eqnarray}
-\mbox{div}\left[(I+\mzk\otimes\mzk)\nabla\phi_k\right]&=& \lambda_k\mbox{div}((n_k\cdot\nabla\psi_k) w_k)+\lambda_k\mbox{div}((w_k\cdot\nabla\psi_k) \mzk)\nonumber\\
&&+F_k\ \ \ \mbox{in $\bdz$,}\ \ \tau\in [-\frac{1}{2}\delta^2_0, \frac{1}{2}\delta^2_0],\label{p1}\\
\phi_k&=&0\ \ \ \mbox{on $\partial\bdz$,}\ \ \tau\in [-\frac{1}{2}\delta^2_0, \frac{1}{2}\delta^2_0].\label{p2}
\end{eqnarray}
We will show that $\{\phi_k\}$ is precompact in $L^\infty(-\frac{1}{2}\delta_0^2,\frac{1}{2}\delta_0^2; L^2(B_{\delta_0}(0)) )$, and this is enough for our purpose in spite of the fact that $\{\eta_k\}$ may not be precompact in the preceding function space. To see this, we first infer from (\ref{pe4}) that
\begin{equation}
\max_{\tau\in[-\frac{1}{32},\frac{1}{32}]}\left(\int_{B_{\frac{1}{4}}(0)}|\nabla\psi_k|^2dy+\int_{B_{\frac{1}{4}}(0)}(n_k\cdot\nabla\psi_k)^2 dy\right)\leq c+
\max_{\tau\in[-\frac{1}{8},\frac{1}{8}]}\int_{B_{\frac{1}{2}}(0)}|w_k|^2dy\leq c.\label{psu}
\end{equation}
Using $\eta_k-\psi_k$ as a test function in (\ref{eta1}) yields
\begin{equation}
\max_{\tau\in [-\frac{1}{2}\delta^2_0, \frac{1}{2}\delta^2_0]}\ibd |\nabla\eta_k|^2 dy\leq \max_{\tau\in [-\frac{1}{2}\delta^2_0, \frac{1}{2}\delta^2_0]}c\ibd |\nabla\psi_k|^2 dy\leq c.\label{eb1}
\end{equation}
Note that (\ref{eta1}) is an uniformly elliptic equation with constant coefficients. The
classical regularity theory asserts that there exist $c>0, \alpha\in (0, 1)$ depending
only on $M, N$ such that
\begin{eqnarray}
\frac{1}{\delta^{N}}\int_{B_\delta(0)}|\eta_k-(\eta_k)_{0,\delta}(\tau)|^2dy&\leq& c\delta^{2\alpha}\avint_{B_{\delta_0}(0)}|\eta_k-(\eta_k)_{0,\delta_0}(\tau)|^2dy\nonumber\\
&\leq& c\delta^{2\alpha}\delta_0^2\avint_{B_{\delta_0}(0)}|\nabla\eta_k|^2dy\leq c\delta^{2\alpha}\label{eb2}
\end{eqnarray} 
for all $\delta\leq \frac{1}{2}\delta_0$.

Now we turn our attention to the sequence $\{\phi_k\}$. We wish to show 
\begin{equation}
\phi_k\rightarrow 0\ \ \ \mbox{strongly in $L^\infty(-\frac{1}{2}\delta_0^2, \frac{1}{2}\delta_0^2; L^2(B_{\delta_0}(0)) )$. }
\end{equation}  This is where the subtlety of our analysis lies. We observe from (\ref{psu}) that 
\begin{equation}\label{eb3}
\max_{\tau\in[-\frac{1}{32},\frac{1}{32}]}\int_{B_{\frac{1}{4}}(0)}|\psi_k|^{\frac{2N}{N-2}}dy\leq c.
\end{equation}
In view of (\ref{eb1}), $\{\phi_k\}$ also satisfies the above estimate. By the interpolation inequality (\cite{G}, p.146) 
\begin{equation}
\|\phi_k(\cdot, \tau)\|_2\leq \varepsilon\|\phi_k(\cdot, \tau)\|_{\frac{2N}{N-2}}+c(\varepsilon)\|\phi_k(\cdot, \tau)\|_1, \ \ \ \varepsilon>0,
\end{equation}
it is sufficient for us to show
\begin{equation}
\max_{\tau \in[-\frac{1}{2}\delta_0^2,\frac{1}{2}\delta_0^2]}\ibd|\phi_k(y,\tau)|dy\rightarrow 0\ \ \mbox{as $k\rightarrow \infty$.}
\end{equation}
Note that the elliptic coefficients in (\ref{p1}) are constants. This puts us in a position to invoke the classical $W^{1,s}$ estimate for $\phi_k$. That is, for each
$s\in (1, \infty)$ there is a positive number $c$ with the property
\begin{equation}
\|\nabla\phi_k\|_s\leq c\lambda_k\|(n_k\cdot\nabla\psi_k) w_k\|_s
+c\lambda_k\|((w_k\cdot\nabla\psi_k) \mzk\|_s+c\|F_k\|_{\frac{sN}{s+N}}.\label{pz}
\end{equation}
Remember that
 (\ref{pz}) does not hold for $s=1$. To find a $s>1$, we will show that there is a $\beta>0$ such that
\begin{equation}
\max_{\tau \in[-\frac{1}{2}\delta_0^2,\frac{1}{2}\delta_0^2]}\ibd|w_k(y,\tau)|^{2(1+\beta)}dy\leq c.\label{pb}
\end{equation}
Obviously, this will imply that 
\begin{equation}
\max_{\tau \in[-\frac{1}{2}\delta_0^2,\frac{1}{2}\delta_0^2]}\left(\|(n_k\cdot\nabla\psi_k) w_k\|_s
+\|(w_k\cdot\nabla\psi_k) \mzk\|_s\right)\leq c
\end{equation}
for some $s>1$. Consequently, the right-hand side of (\ref{pz}) goes to $0$ as $k\rightarrow\infty$. 
To establish (\ref{pb}), we will develop a suitable local version of (C2) in Proposition \ref{l1}. This effort is complicated by the fact that a local version of (C1) in the proposition is not available. The remaining part of this section will be dedicated to the proof of (\ref{pb}), which will be divided into two claims.

 \begin{clm} We have:
 	\begin{equation}
 	\int_{Q_{\frac{1}{8}}(0)}|\sk\nabla\sk|^2 dyd\tau+\int_{Q_{\frac{1}{8}}(0)}\nsk^2|\sk|^2 dyd\tau\leq c.
 	\end{equation}
 \end{clm}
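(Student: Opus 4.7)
The plan is to establish this local Caccioppoli-type estimate for the rescaled elliptic equation (\ref{bl0}) by mimicking, with a spatial cutoff, the global calculation that gave (C2) in Proposition \ref{l1}. Since (\ref{bl0}) has no time derivative, I would work pointwise in $\tau$ and integrate at the end.

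Fix a smooth $\xi=\xi(y)$ with $\xi\equiv 1$ on $B_{1/8}(0)$, $\mathrm{supp}\,\xi\subset B_{1/4}(0)$, $|\nabla\xi|\leq c$, and for each $\tau\in[-\tfrac{1}{32},\tfrac{1}{32}]$ use $\psi_k^{3}\xi^{2}$ as a test function in (\ref{bl0}). Integration by parts in $y$ produces the principal contribution
\[
3\int_{B_{1/4}(0)}\psi_k^{2}\xi^{2}\bigl(|\nabla\psi_k|^{2}+(n_k\cdot\nabla\psi_k)^{2}\bigr)\,dy,
\]
which is exactly what I want on the left, plus two cross terms of the form $\int\psi_k^{3}\xi\,\nabla\xi\cdot\nabla\psi_k\,dy$ and $\int\psi_k^{3}\xi\,(n_k\cdot\nabla\psi_k)(n_k\cdot\nabla\xi)\,dy$, and the source term $\int F_k\psi_k^{3}\xi^{2}\,dy$.

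Young's inequality absorbs the cross terms back into the principal quantities at the cost of
\[
c\int\psi_k^{4}|\nabla\xi|^{2}\,dy+c\int\psi_k^{4}|n_k|^{2}|\nabla\xi|^{2}\,dy.
\]
The first is bounded uniformly because (\ref{eb3}) yields $\|\psi_k\|_{L^{\infty}_{\tau}L^{2N/(N-2)}_{y}(B_{1/4}(0))}\leq c$, and for $N\leq 3$ the exponent $2N/(N-2)\geq 6$ makes $\psi_k^{4}$ locally integrable. For the second, I would decompose $n_k=m_{z_k,r_k}+\lambda_k w_k$ with $|m_{z_k,r_k}|\leq M$; the constant piece reduces to the first term, while $\int\psi_k^{4}|w_k|^{2}$ is handled by Hölder combined with Sobolev applied to $w_k$ (giving $w_k\in L^{2}_{\tau}L^{6}_{y}$ via (\ref{bl3})). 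The source is controlled by Hölder $|\int F_k\psi_k^{3}\xi^{2}|\leq\|F_k\|_{q}\|\psi_k\|_{3q'}^{3}$ using $q>N/2$ from (H1), together with the decay $\|F_k\|_q\to 0$ proved in (\ref{jf4}); the local $L^{6}$ bound on $\psi_k$ then supplies the $\psi_k$-factor. Integrating in $\tau$ over $[-\tfrac{1}{128},\tfrac{1}{128}]$ delivers the bound on $Q_{1/8}(0)$.

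The main obstacle is the absence of a local $L^{\infty}$ bound on $\psi_k$: the global estimate (C1) does not survive the rescaling because $1/\lambda_k\to\infty$, so every cubic or quartic power of $\psi_k$ produced by the test function $\psi_k^{3}\xi^{2}$ must be closed using only the marginal Sobolev exponent $L^{2N/(N-2)}$. This is precisely where the dimensional restriction $N\leq 3$ must enter; in higher dimensions the quartic remainders would no longer be integrable by this direct argument and one would instead need to run the scheme with a truncation $\theta_L(|\psi_k|^{2})^{\beta}\psi_k\xi^{2}$ in the spirit of the proof of (C2), which is exactly what the authors wish to avoid at this step.
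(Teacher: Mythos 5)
Your overall scheme is essentially the paper's: the same test function $\psi_k^3\xi^2$ in (\ref{bl0}), the same cross terms absorbed by Young at the cost of $c\int\psi_k^4|\nabla\xi|^2dy+c\int|n_k|^2\psi_k^4|\nabla\xi|^2dy$, the quartic remainder controlled through the $L^\infty_\tau L^{2N/(N-2)}_y$ bound coming from (\ref{psu}) together with the normalization $\maxn\int_{B_1(0)}\psi_k^2dy\leq 1$, and the energy bound (\ref{bl3}) for $w_k$. Your treatment of the piece $\lambda_k^2\int\psi_k^4|w_k|^2$ by pairing $\psi_k\in L^\infty_\tau L^{6}_y(B_{1/4})$ with $w_k\in L^2_\tau L^{6}_y(B_{1/2})$ is a legitimate variant of the paper's route, which instead uses $|\lambda_k\psi_k|\leq c$ to reduce to $c\psi_k^2|w_k|^2\leq c\psi_k^{\frac{2N}{N-2}}+c|w_k|^N$ and then invokes $\int_{Q_{1/2}(0)}|w_k|^{2+\frac4N}dyd\tau\leq c$ (estimate (\ref{rt33})); either way the restriction $N\leq 3$ enters as you indicate.

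There is, however, a genuine gap in your source term estimate. You bound $\left|\int F_k\psi_k^3\xi^2dy\right|\leq\|F_k\|_q\|\psi_k\|_{3q'}^3$ and claim the local $L^{2N/(N-2)}$ bound supplies the factor $\|\psi_k\|_{3q'}^3$. This requires $3q'\leq\frac{2N}{N-2}$, i.e. $q\geq\frac{2N}{6-N}$, which for $N=3$ means $q\geq 2$; but (H1) only guarantees $q>\frac N2=\frac32$, so for $q\in(\frac32,2)$ your H\"older split is not controlled by any available bound (there is no local $L^\infty$ or higher integrability for $\psi_k$, as you yourself note). The paper's remedy is to split the cubic term as $(F_k\psi_k\xi)\cdot(\psi_k^2\xi)$, estimate $\|F_k\psi_k\xi\|_{\frac{2N}{N+2}}\leq\|F_k\|_{q}\|\psi_k\xi\|_{\frac{2Nq}{(N+2)q-2N}}$ where $\frac{2Nq}{(N+2)q-2N}\leq\frac{2N}{N-2}$ precisely because $q>\frac N2$, and then absorb $\|\psi_k^2\xi\|_{\frac{2N}{N-2}}\leq c\|\nabla(\psi_k^2\xi)\|_2$ into the left-hand side with a small parameter $\delta$, the extra term $c\int\psi_k^4|\nabla\xi|^2dy$ so generated being harmless. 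With that substitution (and keeping the rest of your argument), the claim closes for the whole admissible range $q>\frac N2$.
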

 \begin{proof}
 	Let $\xi$ be a $C^\infty$ function on $\mathbb{R}^N\times\mathbb{R}$ with the properties
 	\begin{eqnarray}
 	\xi &=& 0\ \ \ \mbox{outside $Q_1(0)$, and}\label{rt11}\\
 	\xi&\in&[0,1]\ \ \ \mbox{in $Q_1(0)$.}\label{rt22}
 	\end{eqnarray}
 	Upon using $\sk^3\xi^2$ as a test function in (\ref{bl0}), we deduce
 	\begin{eqnarray}
 	\lefteqn{	\int_{B_1(0)}|\sk\nabla\sk|^2\xi^2 dy+\int_{B_1(0)}\nsk^2\sk^2\xi^2 dy}\nonumber\\
 	&\leq&c\int_{B_1(0)}\sk^4|\nabla\xi|^2 dy+c\int_{B_1(0)}|n_k|^2\sk^4|\nabla\xi|^2 dy+\int_{B_1(0)}|F_k||\sk|^3\xi^2 dy.\label{jf1}
 	\end{eqnarray}
 	Observe that 
 	\begin{equation}
 	|\lambda_k\sk|\leq c.
 	\end{equation}
 	Subsequently, we have
 	\begin{eqnarray}
 	|n_k|^2\sk^4&=&|\lambda_kw_k+\mzk|^2\sk^4\nonumber\\
 	&\leq& 2\lambda_k^2\sk^4|w_k|^2+c\sk^4\nonumber\\
 	&\leq& c\sk^2|w_k|^2+c\sk^4\nonumber\\
 	&\leq& c\psi_k^{\frac{2N}{N-2}}+c|w_k|^N+c\sk^4.\label{jf2}
 	\end{eqnarray}
 		We estimate from (\ref{bl3}) and the Sobolev Embedding Theorem that
 		\begin{eqnarray}
 		\int_{Q_{\frac{1}{2}}(0)}| w_k|^{2 +\frac{4}{N}}dyd\tau&\leq&\int_{-\frac{1}{8}}^{\frac{1}{8}}\left(\int_{B_{\frac{1}{2}}(0)}|w_k|^2dy\right)^{\frac{2}{N}}\left(\int_{B_{\frac{1}{2}}(0)}|w_k|^{\frac{2N}{N-2}}dy\right)^{\frac{N-2}{N}}d\tau\nonumber\\
 		&\leq&c\left(\max_{\tau\in(-\frac{1}{8},\frac{1}{8})}\int_{B_{\frac{1}{2}}(0)}|w_k|^2dy\right)^{\frac{2}{N}}\nonumber\\
 		&&\cdot
 		\left(\int_{Q_{\frac{1}{2}}(0)}|\nabla w_k|^2dyd\tau+\int_{Q_{\frac{1}{2}}(0)}| w_k|^2 dyd\tau\right)\nonumber\\
 		&\leq& c.\label{rt33}
 		\end{eqnarray}
 		Our assumption on the space dimension $N$ implies
 		$$N\leq 2+\frac{4}{N},\ \ \ \frac{2N}{N-2}>4.$$
 		By virtue of (\ref{psu}), we obtain	\begin{eqnarray}
 		\int_{B_{\frac{1}{4}}(0)}\sk^4dy&\leq& c\left(\int_{B_{\frac{1}{4}}(0)}\sk^{\frac{2N}{N-2}}dy\right)^{\frac{2(N-2)}{N}}\nonumber\\
 				&\leq&c\left(\int_{B_{\frac{1}{4}}(0)}|\nabla\sk|^2+\int_{B_{\frac{1}{4}}(0)}|\sk|^2dy\right)^2\nonumber\\
 						&\leq& c\ \ \mbox{for each $\tau\in [-\frac{1}{32}, \frac{1}{32}]$.}\label{rt55}
 		\end{eqnarray}
 		We finally arrive at
 		\begin{equation}
 		\int_{Q_{\frac{1}{4}}(0)}|n_k|^2\sk^4dyd\tau\leq c.
 		\end{equation}
 	Recall that $q>\frac{N}{2}$.
 Then we have $\frac{2Nq}{(N+2)q-2N}\leq \frac{2N}{N-2}$. Keeping this in mind, we calculate from (\ref{jf4}) that
 	\begin{eqnarray}
 	\|F_k\sk\xi\|_{\frac{2N}{N+2}}^2&\leq& \|F_k\|_{q, B_1(0)}^{2}\|\sk\xi\|_{\frac{2Nq}{(N+2)q-2N}}^2\nonumber\\
 	&\leq& c\|\sk\xi\|_{\frac{2N}{N-2}}^2\nonumber\\
 	&\leq& c\int_{B_1(0)}|\nabla\sk|^2\xi^2dy+c\int_{B_1(0)}|\sk|^2|\nabla\xi|^2dy.\label{rt44}
 	\end{eqnarray}
 	The last term in (\ref{jf1}) can be estimated as follows
 	\begin{eqnarray}
 	\int_{B_1(0)}|F_k||\sk|^3\xi^2 dy &\leq& \|F_k\sk\xi\|_{\frac{2N}{N+2}}\|\sk^2\xi\|_{\frac{2N}{N-2}}\nonumber\\
 	&\leq& c\|F_k\sk\xi\|_{\frac{2N}{N+2}}\|\nabla(\sk^2\xi)\|_{2}\nonumber\\
 	&\leq& \delta\|\nabla(\sk^2\xi)\|_{2}^2+c(\delta)\|F_k\sk\xi\|_{\frac{2N}{N+2}}^2,\ \ \ \delta>0.
 	\end{eqnarray}
 	Substituting this and (\ref{jf2}) into (\ref{jf1}) and choosing $\delta$ suitably small in the resulting inequality yield
 	\begin{eqnarray}
 	\lefteqn{	\int_{B_1(0)}|\sk\nabla\sk|^2\xi^2 dy+\int_{B_1(0)}\nsk^2\sk^2\xi^2 dy}\nonumber\\
 	&\leq&c\int_{B_1(0)}\sk^4|\nabla\xi|^2 dy+c\int_{B_1(0)}|n_k|^2w_k^4|\nabla\xi|^2 dy+c\|F_k\sk\xi\|_{\frac{2N}{N+2}}^2.\label{jf3}
 		\end{eqnarray}
Integrate this inequality over $[-\frac{1}{128}, \frac{1}{128}]$, then choose $\xi$ suitably, i.e., $\xi=1$ on $Q_{\frac{1}{8}}(0)$ and $0$ outside $Q_{\frac{1}{4}}(0)$, and thereby obtain
the claim.
\end{proof}
		Fix $ K>0$. Define
		$$v_k=\left(\wks-K^2\right)^++K^2.$$ 

	\begin{clm}There is a $\beta>0$ such that
		\begin{equation}
		\max_{\tau\in [-\frac{1}{512},\frac{1}{512}]}\int_{B_{\frac{1}{16}}(0)}\int_{0}^{\wks}[(s-K^2)^++K^2]^\beta dsdy\leq c.
			\end{equation}
		\end{clm}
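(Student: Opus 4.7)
The claim is a local parabolic analogue of (C2) in Proposition~\ref{l1}. Introduce $v_{k,L}=\theta_L(|w_k|^2)$ with $\theta_L$ as in~(\ref{cutoff}) and a smooth cutoff $\xi\in C_c^\infty(\mathbb{R}^{N+1})$ with $\xi\equiv 1$ on $Q_{1/16}(0)$ and $\xi\equiv 0$ outside $Q_{1/8}(0)$. My plan is to test the $w_k$-equation~(\ref{bl1}) against $v_{k,L}^\beta w_k\xi^2$ and the $\psi_k$-equation~(\ref{bl0}) against $2E^2\,v_{k,L}^\beta\psi_k\xi^2$. Writing $n_k=\lambda_k w_k+\mzk$ yields
\[
E^2\lambda_k\int v_{k,L}^\beta(n_k\cdot\nabla\psi_k)(\nabla\psi_k\cdot w_k)\xi^2 = E^2\int v_{k,L}^\beta(n_k\cdot\nabla\psi_k)^2\xi^2-E^2\int v_{k,L}^\beta(n_k\cdot\nabla\psi_k)(\mzk\cdot\nabla\psi_k)\xi^2,
\]
so that after summing the two identities the dangerous quadratic on the right of the $w_k$-identity cancels one copy of the $2E^2\int v_{k,L}^\beta(n_k\cdot\nabla\psi_k)^2\xi^2$ appearing on the left of the $\psi_k$-identity. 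What remains on the left is
\[
\tfrac{d}{d\tau}\int\tfrac{1}{2}\int_0^{|w_k|^2}\theta_L(s)^\beta\,ds\,\xi^2\,dy+c\int v_{k,L}^\beta|\nabla w_k|^2\xi^2+c\beta\int v_{k,L}^{\beta-1}|\nabla v_{k,L}|^2\xi^2+c\int v_{k,L}^\beta\bigl(|\nabla\psi_k|^2+(n_k\cdot\nabla\psi_k)^2\bigr)\xi^2,
\]
which is precisely the cancellation mechanism of~(C2).

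The main obstacle is the absence of a local $L^\infty$ bound on $\psi_k$; in~(C2) the $\|p\|_\infty$ estimate from~(C1) was used to absorb commutators of the form $c\beta^2\|p\|_\infty^2\int v_L^{\beta-1}|\nabla v_L|^2$. In its place I would use Claim~4.1, which controls $\psi_k\nabla\psi_k$ and $(n_k\cdot\nabla\psi_k)\psi_k$ in $L^2(Q_{1/8}(0))$, together with $v_{k,L}^{\beta-1}\le K^{2(\beta-1)}$. The typical commutator from the product rule, $c\beta\int\psi_k v_{k,L}^{\beta-1}\nabla v_{k,L}\cdot\nabla\psi_k\,\xi^2$, is bounded by Cauchy--Schwarz as
\[
\le\varepsilon\int v_{k,L}^{\beta-1}|\nabla v_{k,L}|^2\xi^2+c(\varepsilon)\beta^2 K^{2(\beta-1)}\|\psi_k\nabla\psi_k\|_{L^2(Q_{1/8}(0))}^2,
\]
the first term absorbed on the left and the second bounded by Claim~4.1. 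The commutator with $(n_k\cdot\nabla\psi_k)n_k$ in place of $\nabla\psi_k$ is treated similarly with $\|\psi_k(n_k\cdot\nabla\psi_k)\|_{L^2(Q_{1/8}(0))}$ from Claim~4.1 replacing $\|\psi_k\nabla\psi_k\|_{L^2}$; however, estimating the associated first factor $\int v_{k,L}^{\beta-1}(n_k\cdot\nabla v_{k,L})^2\xi^2$ produces an extra $\lambda_k^2 v_{k,L}|\nabla v_{k,L}|^2$ contribution with no analogue in~(C2). I would control this by writing $\lambda_k^2 v_{k,L}=|\lambda_k w_k|^2\le2|n_k|^2+2M^2$ and interpolating $w_k$ between $L^\infty_\tau L^2$ and $L^2_\tau W^{1,2}$ using~(\ref{rt33}); it is precisely here that the hypothesis $N\le3$ enters, so that the H\"older exponents in the resulting parabolic estimate close.

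The source contribution $2E^2\int F_k v_{k,L}^\beta\psi_k\xi^2$ is dominated by~(\ref{jf4}) combined with the $L^q$--Sobolev duality already employed in~(\ref{rt44}); the zero-order term $\frac{r_k^2}{\lambda_k}|n_k|^{2(\gamma-1)}n_k$ from~(\ref{bl1}) is handled via~(\ref{bl4}); and the cutoff remainders involving $\partial_\tau\xi^2$ and $\nabla\xi^2$ are bounded by the same a priori quantities. Choosing $\beta\in(0,1)$ sufficiently small so that all commutators and source terms can be absorbed into the positive left-hand side, I would integrate the resulting differential inequality in $\tau$ from $-1/128$ (where $\xi$ vanishes, providing zero initial datum) up to any $\tau_0\in[-1/512,1/512]$, take the supremum over $\tau_0$, and finally pass to the limit $L\to\infty$. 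This yields the sought uniform-in-$k$ bound on $\max_\tau\int_{B_{1/16}(0)}\int_0^{|w_k|^2}[(s-K^2)^++K^2]^\beta\,ds\,dy$, with a constant depending only on $M$, $K$, and the a priori quantities from Claim~4.1, (\ref{bl3}), (\ref{rt33}), and~(\ref{eb3}).
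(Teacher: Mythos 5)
Your overall strategy is the same as the paper's: test (\ref{bl1}) with $v_k^\beta w_k\xi^2$, test (\ref{bl0}) with a multiple of $v_k^\beta\psi_k\xi^2$ so that the quadratic term produced by the cubic nonlinearity is absorbed, compensate for the missing local sup bound on $\psi_k$ by the $L^2$ control of $\psi_k\nabla\psi_k$ and $\psi_k(n_k\cdot\nabla\psi_k)$ from Claim 4.1, choose $\beta$ small, integrate in $\tau$, and let $L\to\infty$. The one point where you deviate is precisely where your argument has a genuine gap: the commutator $\beta\int(n_k\cdot\nabla\psi_k)\,\psi_k\,v_k^{\beta-1}(n_k\cdot\nabla v_k)\xi^2\,dy$. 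By placing the square on $v_k^{(\beta-1)/2}(n_k\cdot\nabla v_k)$ you generate, after $|n_k|^2\le 2\lambda_k^2|w_k|^2+2M^2$ and $|w_k|^2\le v_k$, the term $\lambda_k^2\int v_k^{\beta}|\nabla v_k|^2\xi^2\,dy$, which is of the size $\int\left(|n_k|^2+c\right)v_k^{\beta}|\nabla w_k|^2\xi^2\,dy$. None of the quantities you invoke controls this: (\ref{bl3}) only gives $\nabla w_k\in L^2$, (\ref{rt33}) only gives $w_k\in L^{2+\frac4N}$, and a product of the form $|w_k|^{2+2\beta}|\nabla w_k|^2$ cannot be bounded by interpolating between these, since the factor multiplying $|\nabla w_k|^2\in L^1$ would have to be essentially bounded. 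In the original variables this term is of the type $\int|m|^2|\nabla m|^2dx$, i.e.\ exactly the kind of estimate the claim itself is meant to produce (with $\beta$ replaced by $1$), so the step is circular rather than a closable interpolation. Your remark that the hypothesis $N\le 3$ enters at this point is also misplaced: in the paper it is used earlier, in the proof of Claim 4.1 and in (\ref{rt33})--(\ref{rt55}), through $N\le 2+\frac{4}{N}$ and $\frac{2N}{N-2}>4$.

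The paper avoids the problematic term altogether by the opposite grouping: Cauchy--Schwarz with factors $v_k^{(\beta-1)/2}|\nabla v_k|$ and $v_k^{(\beta-1)/2}|n_k|\,|\psi_k|\,|n_k\cdot\nabla\psi_k|$, so that the first square is absorbed into $\frac{D^2\beta}{2}\int v_k^{\beta-1}|\nabla v_k|^2\xi^2dy$, while for the second one uses the \emph{global} estimate (C1) in the rescaled form $|\lambda_k\psi_k|=|p-p_{y_k,r_k}(t)|\le c$ to obtain $|n_k|^2\psi_k^2\le 2\lambda_k^2|w_k|^2\psi_k^2+2M^2\psi_k^2\le c|w_k|^2+c\psi_k^2$, hence $v_k^{\beta-1}|n_k|^2\psi_k^2(n_k\cdot\nabla\psi_k)^2\le c\,v_k^{\beta}(n_k\cdot\nabla\psi_k)^2+cK^{2(\beta-1)}\psi_k^2(n_k\cdot\nabla\psi_k)^2$; the first piece is reabsorbed for $\beta$ small by the $(n_k\cdot\nabla\psi_k)$-term coming from the $\psi_k$-identity, and the second is exactly what Claim 4.1 bounds. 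If you replace your treatment of this single commutator by that estimate, the rest of your outline proceeds as in the paper.
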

	Obviously, this claim implies (\ref{pb}).	
	\begin{proof} Let $\xi$ be given as in (\ref{rt11})-(\ref{rt22}) and $\beta>0$.
	We may assume that $w_k\in L^\infty(\ot)$ for each $k$. (Otherwise, we use the cut-off function in (\ref{cutoff}).) Then the function $v_k^\beta w_k\xi^2$ is a legitimate test function for (\ref{bl1}).
	Upon using it, we derive
	\begin{eqnarray}
\lefteqn{\frac{1}{2}	\int_{B_1(0)}v_k^\beta\xi^2\partial_\tau\wks dy+D^2\int_{B_1(0)}v_k^\beta\xi^2|\nabla w_k|^2 dy+\frac{D^2\beta}{2}\int_{B_1(0)}v_k^{\beta-1}\xi^2|\nabla v_k|^2 dy}\nonumber\\
&&+D^2\int_{B_1(0)}v_k^\beta \nabla w_k w_k 2\xi\nabla\xi dy+\frac{r_k^2}{\lambda_k}\int_{B_1(0)}|n_k|^{2(\gamma-1)}n_kv_k^\beta w_k\xi^2 dy\nonumber\\
&=&E^2\lambda_k\int_{B_1(0)}\nsk\nabla\sk v_k^\beta w_k\xi^2dy.\label{jf5}
	\end{eqnarray}
	Note that 
	\begin{eqnarray}
	\frac{r_k^2}{\lambda_k}\int_{B_1(0)}|n_k|^{2(\gamma-1)}n_kv_k^\beta w_k\xi^2 dy&=&\frac{r_k^2}{\lambda_k^2}\int_{B_1(0)}|n_k|^{2(\gamma-1)}n_kv_k^\beta (n_k-\mzk)\xi^2 dy\nonumber\\
	&=&\frac{r_k^2}{\lambda_k^2}\int_{B_1(0)}|n_k|^{2\gamma}v_k^\beta \xi^2 dy\nonumber\\
	&&-\frac{r_k^2}{\lambda_k^2}\int_{B_1(0)}|n_k|^{2(\gamma-1)}n_kv_k^\beta \mzk\xi^2 dy\nonumber\\
	&\geq& \frac{1}{2}\frac{r_k^2}{\lambda_k^2}\int_{B_1(0)}|n_k|^{2\gamma}v_k^\beta \xi^2 dy-c\frac{r_k^2}{\lambda_k^2}\int_{B_1(0)}v_k^\beta \xi^2 dy.
	\end{eqnarray}
Now we analyze the last term in (\ref{jf5}) to obtain
\begin{eqnarray}
\lambda_k\int_{B_1(0)}\nsk\nabla\sk v_k^\beta w_k\xi^2dy&=&\int_{B_1(0)}\nsk\nabla\sk v_k^\beta (n_k-\mzk)\xi^2dy\nonumber\\
&=&\int_{B_1(0)}\nsk^2 v_k^\beta \xi^2dy\nonumber\\
&&-\int_{B_1(0)}\nsk\nabla\sk v_k^\beta \mzk\xi^2dy\nonumber\\
&\leq&2\int_{B_1(0)}\nsk^2 v_k^\beta \xi^2dy\nonumber\\
&&+c\int_{B_1(0)}|\nabla\sk|^2 v_k^\beta \xi^2dy.
\end{eqnarray}
Combining the preceding three estimates gives
\begin{eqnarray}
\lefteqn{\frac{1}{2}\frac{d}{d\tau}	\int_{B_1(0)}\int_{0}^{\wks}[(s-K^2)^++K^2]^\beta ds\xi^2 dy+D^2\int_{B_1(0)}v_k^\beta\xi^2|\nabla w_k|^2 dy}\nonumber\\
&&+\frac{D^2\beta}{2}\int_{B_1(0)}v_k^{\beta-1}\xi^2|\nabla v_k|^2 dy+\frac{r_k^2}{\lambda_k^2}\int_{B_1(0)}|n_k|^{2\gamma}v_k^\beta \xi^2 dy\nonumber\\
&\leq&c\int_{B_1(0)}\int_{0}^{\wks}[(s-K^2)^++K^2]^\beta ds\xi\partial_\tau\xi dy+c\int_{B_1(0)}v_k^\beta\wks |\nabla\xi|^2 dy\nonumber\\
&&+c\frac{r_k^2}{\lambda_k^2}\int_{B_1(0)}v_k^\beta \xi^2 dy+2E^2\int_{B_1(0)}\nsk^2 v_k^\beta \xi^2dy+c\int_{B_1(0)}|\nabla\sk|^2 v_k^\beta \xi^2dy.
\label{jf6}
\end{eqnarray}
To estimate the last two terms in the above inequality, we use $\sk v_k^\beta\xi^2$ as a test function in (\ref{bl0}) to obtain
\begin{eqnarray}
\lefteqn{\int_{B_1(0)}|\nabla\sk|^2 v_k^\beta \xi^2dy+\int_{B_1(0)}\nabla\sk\sk \beta v_k^{\beta-1} \nabla v_k\xi^2dy+\int_{B_1(0)}\nabla\sk\sk  v_k^{\beta} 2\xi\nabla\xi dy}\nonumber\\
&&+\int_{B_1(0)}\nsk^2 v_k^\beta \xi^2dy+\int_{B_1(0)}\nsk n_k\sk \beta v_k^{\beta-1} \nabla v_k \xi^2dy\nonumber\\
&&+\int_{B_1(0)}\nsk n_k\sk  v_k^{\beta} 2 \xi \nabla\xi dy\nonumber\\
&=&\int_{B_1(0)}F_k\sk v_k^\beta\xi^2dy.\label{jf7}
\end{eqnarray}
Observe that
\begin{eqnarray}
\left|\int_{B_1(0)}\nabla\sk\sk \beta v_k^{\beta-1} \nabla v_k\xi^2dy\right|&\leq&\frac{D^2}{16E^2}\beta\int_{B_1(0)}v_k^{\beta-1}\xi^2|\nabla v_k|^2 dy\nonumber\\
&&+c\beta\int_{B_1(0)}v_k^{\beta-1}\xi^2|\nabla \sk\sk|^2 dy\nonumber\\
&\leq&\frac{D^2}{16E^2}\beta\int_{B_1(0)}v_k^{\beta-1}\xi^2|\nabla v_k|^2 dy\nonumber\\
&&+c\beta K^{2(\beta-1)}\int_{B_1(0)}\xi^2|\nabla \sk\sk|^2 dy.
\end{eqnarray}
Here we have used the fact that $v_k\geq K^2$ and $\beta<1$. The fifth integral in (\ref{jf7}) can be estimated as follows.
\begin{eqnarray}
\left|\int_{B_1(0)}\nsk n_k\sk \beta v_k^{\beta-1} \nabla v_k \xi^2dy\right|&\leq&\frac{D^2}{16E^2}\beta\int_{B_1(0)}v_k^{\beta-1}\xi^2|\nabla v_k|^2 dy\nonumber\\
&&+c\beta\int_{B_1(0)}v_k^{\beta-1}\xi^2n_k^2\sk^2\nsk^2 dy.
\end{eqnarray}
Remember
\begin{eqnarray}
|n_k|^2\sk^2 &=& |\lambda_k w_k+\mzk|^2\sk^2\nonumber\\
&\leq &2\lambda_k^2\wks\sk^2+c\sk^2\nonumber\\
&\leq& c\wks+c\sk^2
\end{eqnarray}
and $v_k^{\beta-1}\wks\leq v_k^\beta$. Consequently,
\begin{eqnarray}
\left|\int_{B_1(0)}\nsk n_k\sk \beta v_k^{\beta-1} \nabla v_k \xi^2dy\right|&\leq&\frac{D^2}{16E^2}\beta\int_{B_1(0)}v_k^{\beta-1}\xi^2|\nabla v_k|^2 dy\nonumber\\
&&+c\beta\int_{B_1(0)}v_k^{\beta}\xi^2\nsk^2 dy\nonumber\\
&&+c\beta K^{2(\beta-1)}\int_{B_1(0)}\xi^2\sk^2\nsk^2 dy.
\end{eqnarray}
Using the preceding estimates in (\ref{jf7})
\begin{eqnarray}
\lefteqn{\int_{B_1(0)}|\nabla\sk|^2 v_k^\beta \xi^2dy
+(1-c\beta)\int_{B_1(0)}\nsk^2 v_k^\beta \xi^2dy}\nonumber\\
&\leq&\frac{D^2}{8E^2}\beta\int_{B_1(0)}v_k^{\beta-1}\xi^2|\nabla v_k|^2 dy+c\beta K^{2(\beta-1)}\int_{B_1(0)}\xi^2|\nabla \sk\sk|^2 dy\nonumber\\
&&+c\beta K^{2(\beta-1)}\int_{B_1(0)}\xi^2\sk^2\nsk^2 dy+c\int_{B_1(0)}v_k^\beta\sk^2|\nabla\xi|^2dy\nonumber\\
&&+c\int_{B_1(0)}v_k^\beta\wks|\nabla\xi|^2dy+
\int_{B_1(0)}F_k\sk v_k^\beta\xi^2dy.\label{jf8}
\end{eqnarray}
Plugging this into (\ref{jf6}) and choosing $\beta$ suitably small in the resulting inequality, we obtain
\begin{eqnarray}
\lefteqn{\frac{1}{2}\frac{d}{d\tau}	\int_{B_1(0)}\int_{0}^{\wks}[(s-K^2)^++K^2]^\beta ds\xi^2 dy+D^2\int_{B_1(0)}v_k^\beta\xi^2|\nabla w_k|^2 dy}\nonumber\\
&&+\frac{D^2\beta}{2}\int_{B_1(0)}v_k^{\beta-1}\xi^2|\nabla v_k|^2 dy+\frac{r_k^2}{\lambda_k^2}\int_{B_1(0)}|n_k|^{2\gamma}v_k^\beta \xi^2 dy\nonumber\\
&\leq&c\int_{B_1(0)}\int_{0}^{\wks}[(s-K^2)^++K^2]^\beta ds\xi\partial_\tau\xi dy+c\int_{B_1(0)}v_k^\beta\wks |\nabla\xi|^2 dy\nonumber\\
&&+c\frac{r_k^2}{\lambda_k^2}\int_{B_1(0)}v_k^\beta \xi^2 dy+c\beta K^{2(\beta-1)}\int_{B_1(0)}\xi^2|\nabla \sk\sk|^2 dy\nonumber\\
&&+c\beta K^{2(\beta-1)}\int_{B_1(0)}\xi^2\sk^2\nsk^2 dy+c\int_{B_1(0)}v_k^\beta\sk^2|\nabla\xi|^2dy\nonumber\\
&&+
\int_{B_1(0)}F_k\sk v_k^\beta\xi^2dy.
\label{jf9}
\end{eqnarray}
In view of (\ref{rt33}), (\ref{rt55}), and(\ref{rt44}), if $\beta$ is sufficiently small, we have
\begin{eqnarray*}
\int_{Q_{\frac{1}{2}}(0)}\int_{0}^{\wks}[(s-K^2)^++K^2]^\beta dsdyd\tau&\leq&c,\\
\int_{Q_{\frac{1}{2}}(0)}v_k^\beta\wks dyd\tau&\leq&c,\\
\int_{B_{\frac{1}{2}}(0)}v_k^\beta\sk^2dy&\leq&c \ \ \mbox{for $\tau\in [-\frac{1}{8},\frac{1}{8}]$,}\\
\left|\int_{B_{\frac{1}{2}}(0)}F_k\sk v_k^\beta dy\right|&\leq&c \ \ \mbox{for $\tau\in [-\frac{1}{8},\frac{1}{8}]$.}
\end{eqnarray*}
Integrate (\ref{jf9}) with respect to $\tau$, choose $\xi$ suitably, and remember Claim 1 to yield the desired result. 
\end{proof}

This finishes the proof of Proposition 1.2.
\end{proof}

	  \bigskip
	  
	  \noindent{\bf Acknowledgment:} The first author would like to thank Peter Markowich 
	  for suggesting this problem to him and for some helpful discussions. Both authors are grateful to   Peter Markowich and Jan Haskovec for their careful reading of an earlier version of this manuscript and for their useful comments. The research of JL was partially supported by
	  KI-Net NSF RNMS grant No. 1107291 and NSF grant DMS 1514826.

\end{document}